\documentclass[12pt,leqno]{article}
\usepackage{amsmath,amssymb,amsthm,amscd,dsfont}
\numberwithin{equation}{section} \allowdisplaybreaks
\begin{document}
\newtheorem{theorem}{Theorem}[section]
\newtheorem{defin}{Definition}[section]
\newtheorem{prop}{Proposition}[section]
\newtheorem{corol}{Corollary}[section]
\newtheorem{lemma}{Lemma}[section]
\newtheorem{rem}{Remark}[section]
\newtheorem{example}{Example}[section]
\title{Towards a double field theory on para-Hermitian manifolds}
\author{{\small by}\vspace{2mm}\\Izu Vaisman}
\date{}
\maketitle
{\def\thefootnote{*}\footnotetext[1]%
{{\it 2000 Mathematics Subject Classification: 53C15, 53C80} .
\newline\indent{\it Key words and phrases}: Para-Hermitian Manifold; Double Field Theory; Courant Bracket; Double Metric Connection; Reduction.}}
\begin{center} \begin{minipage}{12cm}
A{\footnotesize BSTRACT. In a previous paper, we have shown that the geometry of double field theory has a natural interpretation on flat para-K\"ahler manifolds. In this paper, we show that the same geometric constructions can be made on any para-Hermitian manifold. The field is interpreted as a compatible (pseudo-)Riemannian metric. The tangent bundle of the manifold has a natural, metric-compatible bracket that extends the C-bracket of double field theory. In the para-K\"ahler case this bracket is equal to the sum of the Courant brackets of the two Lagrangian foliations of the manifold. Then, we define a canonical connection and an action of the field that correspond to similar objects of double field theory. Another section is devoted to the Marsden-Weinstein reduction in double field theory on para-Hermitian manifolds. Finally, we give examples of fields on some well-known para-Hermitian manifolds.} \end{minipage} \end{center}
\vspace*{5mm}
\section{Introduction}
Since several years double field theory has been a subject of much interest in the research in theoretical physics that involves T-duality. We refer the reader to \cite{{HHZ},{HLZ}} for an account of the subject (see also the much earlier paper \cite{Duff}). In \cite{V0}, we have explained that the geometric framework of double field theory may be identified with a flat para-K\"ahler manifold. The aim of the present paper is to show that an action similar to that of double field theory can be constructed on any para-Hermitian manifold. We do not intend to discuss whether there is any physical interest in the obtained action functional and this question remains to be answered by physicists. In \cite{HZ13}, the authors propose an invariant framework for double field theory where double space is characterized by ``generalized coordinate transformations" and define a non-canonical connection, with the same torsion condition that we will use, and a different curvature tensor.

In the present paper everything belongs to the $C^\infty$ category and we follow the usual notation of differential geometry, e.g., \cite{KN}.

For a survey on para-complex and para-Hermitian geometry and for the literature on this subject, we refer to \cite{AMT}.
An almost para-complex structure on a differentiable manifold $M^{2n}$ is a tensor field $F\in End(TM)$ such that $F^2=Id$ and the $\pm1$-eigenbundles of $F$ have the equal rank $n$. Hence, the manifold must have an even dimension $2n$. A metric tensor is a $2$-covariant, symmetric, non degenerate tensor field $\gamma$, and $\gamma$ is compatible with the almost para-complex structure $F$ if
\begin{equation}\label{Fskew} \gamma(FX,Y)=-\gamma(X,FY).\end{equation}
If (\ref{Fskew}) holds, $(F,\gamma)$ is an almost para-Hermitian structure, the $2$-form
$$ \omega(X,Y)=\gamma(X,FY)$$
is the fundamental form, and the structure is almost para-K\"ahler if $d\omega=0$.

We will denote by $L=L_+,\bar{L}=L_-$ the $\pm1$-eigenbundles of $F$; the double notation is introduced for more graphic convenience. By (\ref{Fskew}), $L_\pm$ are $\gamma$-isotropic, hence $\gamma$ must be of signature zero ($\gamma$ is a {\it neutral metric}). If $L_\pm$ are integrable, i.e., tangent to foliations $\mathcal{L}=\mathcal{L}_+,\bar{\mathcal{L}}=\mathcal{L}_-$ (sometimes $L_\pm$ themselves are called foliations), the word ``almost" will be deleted from the name of the structure.

The subbundles $L_\pm$ are isotropic for the form $\omega$ as well. Thus, in the para-K\"ahler case, the foliations $\mathcal{L}_\pm$ are Lagrangian foliations, which is why para-K\"ahler manifolds are also called bi-Lagrangian manifolds \cite{AMT}. We extend the use of the term ``Lagrangian" from symplectic to almost symplectic manifolds (where $d\omega$ does not need to be zero), and say that $L_\pm$ are Lagrangian subbundles (foliations) in the para-Hermitian case as well, but we do not extend the use of the term ``bi-Lagrangian manifold".

Obviously, if a differentiable manifold has an almost para-Hermitian structure, there exists a corresponding reduction of the structure group of the tangent bundle to $O(n,n)$. This situation is analogous to that encountered in generalized geometry (e.g., \cite{G}), which suggests that it should be interesting to study Riemannian metrics that further reduce the structure group to $O(n)\times O(n)$; we call them compatible Riemannian metrics. In the literature on double field theory it was shown that a given field, consisting of a Riemannian metric and a $2$-form on space-time, is equivalent with a compatible metric on the double manifold, which has a natural flat para-K\"ahler structure. The case of fields with a pseudo-Riemannian metric may occur but, compatibility has a different definition to be given later on.

Accordingly, if a general para-Hermitian manifold is going to replace the double of space-time, a field will be defined as a compatible metric and we will recover the objects playing the role of the metric and form components of the field in Section 2. In Section 3, we extend the notion of a C-bracket \cite{HHZ}. The extended bracket is the $\gamma$-compatible bracket defined by the Levi-Civita connection and we will show that, in the para-K\"ahler case, it is equal to the sum of the Courant brackets of the Lagrangian foliations $\mathcal{L}_\pm$. In Section 4, we construct a canonical connection that preserves the metric $\gamma$ and has a vanishing torsion-type invariant defined with the extended C-bracket. There exists a scalar curvature-type invariant and the integral of the latter produces an action of the field. In Section 5, we discuss Marsden-Weinstein reduction by a group of symmetries with an equivariant momentum map in double field theory on a para-Hermitian manifold. In the last Section 6, we give examples of compatible metrics on the most known para-Hermitian manifolds: $G\times G$ (where $G$ is a Lie group), the cotangent bundle of a flat, pseudo-Riemannian manifold and the para-complex projective models \cite{GA}.
\section{Fields and field components}
We begin with the following definition (the word``pseudo" between parentheses is used to indicate that both cases may occur)
\begin{defin}\label{def1} {\rm A {\it field}, equivalently, a {\it compatible (pseudo-)Riemannian metric} on the para-Hermitian manifold $(M^{2n},\gamma,F)$ is a (pseudo-)Riemannian (hence, non degenerate) metric	$g$ such that $g|_{L_-}$ is non degenerate and
\begin{equation}\label{compat} \sharp_g\circ\flat_\gamma= \sharp_\gamma\circ\flat_g,\end{equation}
where the musical isomorphisms are those defined in Riemannian geometry.} \end{defin}

The restriction $g|_{L_-}$ is automatically non degenerate in the Riemannian case. The choice of $L_-$ rather than $L_+$ is made in order to facilitate comparison with the literature on double field theory. The subbundles $L_\pm$ play a similar role, but the choice of the tensor fields $F$ distinguishes between them and to exchange $L_+$ with $L_-$ we have to change $F$ into $-F$. A manifold with a structure $(F,\gamma,g)$ as in Definition \ref{def1} will also be called a {\it(pseudo-)Riemannian, para-Hermitian manifold} ({\it para-K\"ahler}, if $d\omega=0$). Finally, notice that Definition \ref{def1} may be formulated on almost para-Hermitian manifolds but we are not interested in this case.
\begin{prop}\label{propH} A compatible metric $g$ is equivalent with an almost product structure $H$ on $M$ such that the tensor field
\begin{equation}\label{gdinH} g(X,Y)=\gamma(HX,Y)\end{equation}
is symmetric, non degenerate, and has a non degenerate restriction to $L_-$. Furthermore, $H$ is an almost para-complex structure, and $g$ provides a further reduction of the structure group of $TM$ to a group of the form $O(p,q)\times O(q,p)$, where $p+q=n$.
\end{prop}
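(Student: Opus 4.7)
The strategy is to set up a bijection $g \leftrightarrow H$ via $H := \sharp_\gamma \circ \flat_g$, and then to analyse the extra properties. From $g(X,Y)=\gamma(HX,Y)$ one reads $\flat_g = \flat_\gamma \circ H$, so that $\sharp_g \circ \flat_\gamma = H^{-1}$ while $\sharp_\gamma \circ \flat_g = H$. Hence the compatibility (\ref{compat}) is equivalent to $H^{-1}=H$, i.e., to $H^2 = Id$. The symmetry of $g$ translates, by the symmetry of $\gamma$, into $\gamma(HX,Y)=\gamma(X,HY)$, i.e., into $H$ being $\gamma$-self-adjoint; non-degeneracy of $g$ corresponds to invertibility of $H$, which is automatic from $H^2=Id$. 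Running the computation in reverse converts any such $H$ back into a compatible metric, and the non-degeneracy of $g|_{L_-}$ is carried along as part of the data on both sides.

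Next, to show that $H$ is in fact an almost para-complex structure, set $V_\pm := \ker(H \mp Id)$, so that $TM = V_+ \oplus V_-$. The $\gamma$-self-adjointness of $H$ gives $V_+ \perp_\gamma V_-$, since for $X\in V_+$ and $Y\in V_-$ one has $\gamma(X,Y) = \gamma(HX,Y) = \gamma(X,HY) = -\gamma(X,Y)$. The key step, and the main obstacle of the proof, is to show that $V_\pm \cap L_- = 0$: if $X \in V_\pm \cap L_-$, then for every $Y \in L_-$ one has $g(X,Y) = \gamma(HX,Y) = \pm\gamma(X,Y) = 0$, the last equality because $L_-$ is $\gamma$-isotropic; non-degeneracy of $g|_{L_-}$ then forces $X=0$. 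Combined with $\dim L_- = n$ this yields $\dim V_\pm \le n$, and from $\dim V_+ + \dim V_- = 2n$ we conclude $\dim V_+ = \dim V_- = n$.

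Finally, the structure-group reduction is a frame-bundle observation. Since $V_+ \perp_\gamma V_-$ inside the neutral space $(TM,\gamma)$, the restrictions $\gamma|_{V_\pm}$ are non-degenerate with signatures summing to $(n,n)$; writing the signature of $\gamma|_{V_+}$ as $(p,q)$ with $p+q=n$, the signature of $\gamma|_{V_-}$ is $(q,p)$. The subbundle of frames adapted to the splitting $V_+ \oplus V_-$ that are $\gamma$-orthonormal is a principal $O(p,q)\times O(q,p)$-bundle, giving the claimed reduction; the remaining verifications amount to routine linear algebra.
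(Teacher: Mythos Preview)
Your proof is correct and follows essentially the same route as the paper's. The only cosmetic difference is that you define $H:=\sharp_\gamma\circ\flat_g$ whereas the paper sets $H:=\sharp_g\circ\flat_\gamma$; under the compatibility condition (\ref{compat}) these coincide, and your choice has the slight advantage of yielding (\ref{gdinH}) immediately. The paper additionally records, inside the proof, the two-sided equivalence $g|_{L_-}$ non-degenerate $\Leftrightarrow L_-\cap V_\pm=0$ together with the relations $g|_{V_\pm}=\pm\gamma|_{V_\pm}$ and $V_+\perp_g V_-$ (formulas (\ref{ggamma}), (\ref{ggammaH})), which are used later in the paper; you prove only the implication needed for the proposition itself, which is entirely sufficient here.
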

\begin{proof} Notice that in the Riemannian case we just have to require that $g$ be symmetric and positive definite. Generally,
given $g$, we define $H=\sharp_g\circ\flat_\gamma\in End(TM)$. Then, (\ref{compat}) is equivalent with $H^2=Id$, hence, $H$ is an almost product structure and we will denote by $V_\pm$ the $\pm1$-eigenbundles of $H$. The definition of $H$ and (\ref{compat}) imply (\ref{gdinH}). Furthermore, we can see that the condition that $g|_{L_-}$ be non degenerate is equivalent with $L_-\cap V_\pm=0$. Indeed, since $L_-$ is $\gamma$-isotropic, we have $$g(v,w)=\gamma(Hv,w)=\pm\gamma(v,w)=0,\;\; v\in V_\pm\cap L_-,w\in L_-.$$
Thus, $g|_{L_-}$ non degenerate implies $L_-\cap V_\pm=0$. For the converse, assume that $L_-\cap V_\pm=0$. If $v\in L_-$ is $g$-orthogonal to $L_-$, then, $Hv$ is $\gamma$-orthogonal to $L_-$, therefore, $Hv\in L_-$ and $pr_{V_\pm}v=(1/2)(v\pm Hv)\in L_-\cap V_\pm$. Thus, $Hv=0$ and $v=0$, which shows that $g|_{L_-}$ is non degenerate. (Similarly, $g|_{L_+}$ non degenerate is equivalent with $L_+\cap V_\pm=0$.) Since $rank\,L_-=n$, $L_-\cap V_\pm=0$ implies $rank\,V_\pm\leq n$ and because $TM=V_+\oplus V_-$, we must have $rank\,V_\pm= n$. Therefore, $H$ is an almost para-complex structure. Moreover, if $g|_{V_\pm}$ were degenerate, the decomposition $TM=L_-\oplus V_\pm$ would imply the degeneracy of $g$ on $TM$, which is contrary to the hypotheses. Now, notice that (\ref{gdinH}) also implies
\begin{equation}\label{ggamma} g|_{V_\pm}=\pm\gamma|_{V_\pm},\hspace{2mm} V_+\perp_g V_-,\hspace{2mm}V_+\perp_\gamma V_-.\end{equation} Accordingly, $\gamma|_{V_\pm}$ are non degenerate and, if $p,q$, $p+q=n$, are the positive-negative inertia indices of $\gamma|_{V_+}$, the corresponding indices of $\gamma|_{V_-}$ are $q,p$. This remark and (\ref{ggamma}) justify the required reduced structure group.

Finally, it is obvious that if $H$ satisfies the hypotheses of the proposition, formula (\ref{gdinH}) provides a metric that satisfies the conditions of Definition \ref{def1}.

Notice also the compatibility relations
\begin{equation}\label{ggammaH} g(HX,HY)=g(X,Y),\;\gamma(HX,HY)=\gamma(X,Y).
\end{equation}
\end{proof}

Now, we will prove that a field in the sense of Definition \ref{def1} may be interpreted as a pair that consists of a metric and a $2$-form along the leaves of the foliation $L$. The proof follows a known procedure of generalized geometry \cite{{G},{V0}}.
\begin{prop}\label{existence} On any (almost) para-Hermitian manifold, there exists a bijective correspondence between the compatible, (pseudo-)Riemannian metrics $g$ and the pairs $(k,\beta)$ where $k$ is a (pseudo-)Euclidean metric on $L$ and $\beta$ is a $2$-form on $L$.
\end{prop}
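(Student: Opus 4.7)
The plan is to follow the graph-construction procedure of generalized geometry: the compatible metric is encoded by the $(+1)$-eigenbundle $V_+$ of the structure $H$ from Proposition \ref{propH}, which under the hypotheses is a graph over $L=L_+$ into $L_-$ and can be decomposed into a symmetric and an antisymmetric form on $L$ via the $\gamma$-duality between $L_+$ and $L_-$.

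Forward direction: starting with a field $g$, Proposition \ref{propH} supplies $H$ with $(\pm 1)$-eigenbundles $V_\pm$ of rank $n$ satisfying $V_\pm\cap L_-=0$. This transversality allows one to write $V_+$ as the graph of a bundle morphism $\phi:L_+\to L_-$, namely $V_+=\{X+\phi X\mid X\in L_+\}$. Because $L_\pm$ are maximal $\gamma$-isotropic, $\gamma$ restricts to a perfect pairing $L_+\times L_-\to\mathbb{R}$, so $\phi$ is equivalent to the bilinear form $\tau(X,Y):=\gamma(\phi X,Y)$ on $L$. Splitting $\tau=k+\beta$ into symmetric and antisymmetric parts, a short calculation using the isotropy of $L_\pm$ gives
\[
\gamma|_{V_+}(X+\phi X,\,Y+\phi Y)=\gamma(X,\phi Y)+\gamma(\phi X,Y)=2k(X,Y).
\]
Since $g|_{V_+}=\gamma|_{V_+}$ is non-degenerate, so is $k$.

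Inverse direction: given $(k,\beta)$, I would define $\phi,\psi:L_+\to L_-$ by $\gamma(\phi X,Y)=k(X,Y)+\beta(X,Y)$ and $\gamma(\psi X,Y)=-k(X,Y)+\beta(X,Y)$, take $V_\pm$ to be their graphs, and check $V_+\perp_\gamma V_-$ together with $\gamma|_{V_\pm}=\pm 2k$; both follow by direct bookkeeping using the symmetry of $k$ and the antisymmetry of $\beta$. Non-degeneracy of $k$ gives $V_+\cap V_-=0$, hence $TM=V_+\oplus V_-$ by dimension. If $H$ is the involution with $V_\pm$ as $(\pm 1)$-eigenbundles, Proposition \ref{propH} shows that $g:=\gamma(H\cdot,\cdot)$ is a compatible metric; the graph description of $V_\pm$ over $L_+$ makes $V_\pm\cap L_-=0$ automatic, so $g|_{L_-}$ is non-degenerate. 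Checking that the two constructions are mutually inverse is a routine unwinding of definitions.

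The only subtle point is the role of the extra condition in Definition \ref{def1}: by Proposition \ref{propH}, $g|_{L_-}$ non-degenerate is equivalent to the transversality $V_\pm\cap L_-=0$, which is exactly what permits the graph representation. In the Riemannian case this holds automatically, but in the pseudo-Riemannian setting it is the indispensable hypothesis; apart from this, the whole argument reduces to pointwise linear algebra on $L_+$ and $L_-$.
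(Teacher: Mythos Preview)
Your proof is correct and follows the standard ``graph'' route of generalized geometry, but it is organized differently from the paper's. The paper does not parametrize $V_+$ as a graph over $L_+$; instead it writes $H$ in block form with respect to $TM=L_+\oplus L_-$ (entries $\psi_\pm,\theta,\tilde\theta$ as in (\ref{matriceaH})), uses the $\gamma$-symmetry of $H$ and $H^2=Id$ to reduce the data to a pair $(\tilde\theta,\psi_-)$ with $\tilde\theta$ invertible, then packages this as a metric $g_-=g|_{L_-}$ together with a $2$-form $\beta_-$ on $L_-$, and finally transports both to $L=L_+$ via $\tilde\theta^{-1}$ to obtain $(k,\beta)$ by (\ref{defk}). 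Your version is shorter and more geometric; the paper's has the advantage that the block entries $\psi_\pm,\theta,\tilde\theta$ reappear explicitly in Lemma \ref{lemaiota}, Proposition \ref{expriota} and in Section 4, so its bookkeeping is reused downstream. One minor caveat: your $(k,\beta)$ differ from the paper's by an overall factor of $2$ (in your normalization the graph identification gives $\gamma|_{V_+}\leftrightarrow 2k$, whereas Proposition \ref{expriota} states $g|_{V_+}\leftrightarrow k$); this is harmless for the bijection itself but worth keeping track of if you later quote (\ref{defk}).
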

\begin{proof}
The endomorphism $H$ has a matrix representation
\begin{equation}\label{matriceaH} H\left(\begin{array}{c}
X_+\vspace{2mm}\\ X_-\end{array}\right)=
\left(\begin{array}{cc}\psi_+&\tilde{\theta}\vspace*{2mm}\\
\theta&\psi_-\end{array}\right)
\left(\begin{array}{c} X_+\vspace{2mm}\\ X_-\end{array}\right),
\end{equation} where $X_\pm\in L_\pm$, $\psi_\pm\in End(L_\pm)$, $\theta\in Hom(L_+,L_-)$, $\tilde{\theta}\in Hom(L_-,L_+)$.
Correspondingly, the metric $g$ may be written under the form
\begin{equation}\label{gcumatricea} \begin{array}{l}
g(X_+,Y_+)=\gamma(\theta X_+,Y_+),\,
g(X_-,Y_-)=\gamma(\tilde{\theta} X_-,Y_-),\vspace*{2mm}\\
g(X_+,Y_-)=\gamma(\psi_+X_+,Y_-),\,
g(X_-,Y_+)=\gamma(\psi_-X_-,Y_+).\end{array}\end{equation}

The symmetry of the tensor $g$ defined by(\ref{gdinH}) means that $\theta,\tilde{\theta}$ produce symmetric tensor fields $h_\pm=g|_{L_\pm}$ and $\psi_\pm$ are the transposed of $\psi_\mp$ with respect to the metric $\gamma$. The non degeneracy of $g|_{L_-}$ means that $\tilde{\theta}$ is an isomorphism. Furthermore, the product condition $H^2=Id$ is equivalent to
\begin{equation}\label{Hprodus} \begin{array}{l}
\psi_+^2+\tilde{\theta}\circ\theta=Id_{L_+},\,
\psi_-^2+\theta\tilde{\theta}=Id_{L_-},
\vspace*{2mm}\\ \psi_+\circ\tilde{\theta}+\tilde{\theta}\circ\psi_-=0,\,
\psi_-\circ\theta+\theta\circ\psi_+=0,
\end{array}\end{equation} where the second and fourth condition are implied by the first and third condition by $\gamma$-transposition.

Accordingly, we get a bijective correspondence between the required structures $H$ and the pairs $(\psi_-,\tilde{\theta})$ where $\tilde{\theta}$ is an isomorphism, which give $$\psi_+=-\tilde{\theta}\psi_-\tilde{\theta}^{-1},
\theta=(Id-\psi_-^2)\tilde{\theta}^{-1}.$$ The third condition (\ref{Hprodus}) is equivalent with the fact that
$$ \beta_-(X_-,Y_-)=\gamma(\tilde{\theta}\psi_-X_-,Y_-)$$
is a $2$-form on $L=L_-$. Thus, also looking at (\ref{gcumatricea}), the pair $(\psi_-,\tilde{\theta})$ may be replaced by the pair
$(g_-=g|_{L_-},\beta_-)$ where $g_-$ is a non degenerate metric on $L_-$ and $\beta_-$ is a $2$-form on $L_-$. This configuration is equivalent with a pair $(k,\beta)$ as required by putting
\begin{equation}\label{defk} \begin{array}{l}
k(X_+,Y_+)=2g_-(\tilde{\theta}^{-1}X_+
,\tilde{\theta}^{-1}Y_+)=2\gamma(\tilde{\theta}^{-1}X,Y),\vspace*{2mm}\\ \beta(X_+,Y_+)=2\beta_-(\tilde{\theta}^{-1}X_+
,\tilde{\theta}^{-1}Y_+)=2\gamma(\psi_-\theta^{-1}X_+,Y_+).\end{array}
\end{equation}
The last equality in (\ref{defk}) is a consequence of (\ref{gcumatricea}).
\end{proof}
\begin{defin}\label{def2} {\rm If $g$ is a field on the para-Hermitian manifold $(M,F,\gamma)$, the corresponding tensors $k,\beta$ are the {\it $L$-components} of the field.}\end{defin}

In string theory, the components $k,\beta$ correspond to the graviton and the Kalb-Ramond charge, respectively.
\begin{rem}\label{obsK} {\rm
From the proof of Proposition \ref{existence} we also see that a (pseudo-)Riemannian, almost para-Hermitian structure may be seen as a triple $(\gamma,F,H)$ where $(\gamma,F)$ is an almost para-Hermitian structure and $H$ is an almost product structure that satisfies the second condition (\ref{ggammaH}). In the Riemannian case one has to require that $g$ be positive definite. One more possibility is to see such a structure as a triple $(g,H,F)$ where $g$ is a metric, $H,F$ are as above and the following conditions hold
$$ g(HX,HY)=g(X,Y),\;g(KX,Y)=-g(X,KY),\hspace{2mm}K=HF.$$}\end{rem}

Now, since $L_-\cap V_\pm=0$, the projections $\tau_\pm=pr_{L}:V_\pm\rightarrow L=L_+$ defined by the decomposition $TM=L_+\oplus L_-$ are injections (if $v\in V_\pm$ and $\tau_\pm v=0$ then $v\in L_-$ too and $v=0$), therefore isomorphisms, with inverse mappings $\iota_\pm=\tau_\pm^{-1}:L\rightarrow V_\pm$, which allow a transfer of structures between the bundle $L$ and the bundles $V_\pm$ \cite{G}.
\begin{lemma}\label{lemaiota} A vector $X=X_++X_-$ belongs to $V_\pm$ iff
\begin{equation}\label{eqlema} X_-=-\tilde{\theta}^{-1}(\psi_+\mp Id)X_+.
\end{equation}\end{lemma}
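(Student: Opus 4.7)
The plan is to unfold the condition $HX = \pm X$ using the block matrix form (\ref{matriceaH}) of $H$ with respect to the decomposition $TM = L_+ \oplus L_-$. Applying $H$ to $X = X_+ + X_-$ and comparing with $\pm X_+ \pm X_-$ yields the pair of equations
\begin{equation*}
\psi_+ X_+ + \tilde{\theta} X_- = \pm X_+, \qquad \theta X_+ + \psi_- X_- = \pm X_-.
\end{equation*}
Because $g|_{L_-}$ is non degenerate, $\tilde{\theta}$ is an isomorphism (established in the proof of Proposition \ref{existence}), so the first equation can be solved for $X_-$ to give exactly (\ref{eqlema}). This disposes of the ``only if'' direction.

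For the ``if'' direction I would substitute the expression for $X_-$ from (\ref{eqlema}) into the second equation and check that it becomes an identity, using the algebraic constraints (\ref{Hprodus}). Specifically, the third relation in (\ref{Hprodus}) rewritten as $\psi_-\tilde{\theta}^{-1} = -\tilde{\theta}^{-1}\psi_+$ lets one commute $\psi_-$ past $\tilde{\theta}^{-1}$, while the first relation $\tilde{\theta}\theta = Id - \psi_+^2$ supplies $\theta = \tilde{\theta}^{-1}(Id - \psi_+^2)$. Expanding $(\psi_- \mp Id)\tilde{\theta}^{-1}(\psi_+ \mp Id)X_+$ and applying these two identities, the cross-terms $\mp\tilde{\theta}^{-1}\psi_+$ cancel and what remains is $\tilde{\theta}^{-1}(Id - \psi_+^2)X_+ = \theta X_+$, as required.

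No step is really delicate; the only point worth care is that both equations of the eigenvalue system must hold, so one must verify that (\ref{eqlema}), which originates only from the top block, is consistent with the bottom block. That consistency is what the relations (\ref{Hprodus}) guarantee, and pinpointing the correct pair of relations to invoke is the one place where a small calculation is unavoidable. The remainder is just reading off a linear system.
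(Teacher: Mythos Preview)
Your argument is correct. Both directions go through exactly as you describe: solving the $L_+$-block of $HX=\pm X$ gives (\ref{eqlema}), and the relations (\ref{Hprodus}) then force the $L_-$-block to hold automatically, so the eigenvector condition is fully recovered.

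The paper's proof takes a somewhat different route. For the forward direction it pairs the identity $\gamma(X,Y_-)=\pm g(X,Y_-)$ against arbitrary $Y_-\in L_-$ and expands $g$ via (\ref{gcumatricea}); this yields the same top-block equation you wrote down, but arrived at through the metrics rather than the matrix of $H$. For the converse, rather than verifying the bottom block via (\ref{Hprodus}) as you do, the paper runs the metric calculation backwards to conclude only that $pr_{L_+}(X\mp HX)=0$, hence $X\mp HX=2\,pr_{V_\mp}X$ lies in $L_-\cap V_\mp$, which was shown to be zero in Proposition~\ref{propH}. So the paper leans on the geometric fact $L_-\cap V_\mp=0$, whereas your approach is purely algebraic and self-contained, invoking only $H^2=Id$. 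Your route is slightly more elementary in that it avoids the earlier non-degeneracy discussion; the paper's route has the minor advantage of not needing to track the bottom-block identity at all.
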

\begin{proof} For $X\in V_\pm$ and any $Y_-\in L_-$, we have
$$\begin{array}{l} \gamma(X_+,Y_-)=\gamma(X,Y_-)=g(HX,Y_-)
\vspace*{2mm}\\ =\pm g(X,Y_-)=\pm g(X_+,Y_-)\pm g(X_-,Y_-)
\vspace*{2mm}\\ =\pm\gamma(\psi_+X_+,Y_-)\pm\gamma(\tilde{\theta} X_-,Y_-),
\end{array}$$
which implies $$ X_+=\pm\psi_+X_+
\pm\tilde{\theta} X_-.$$ Since $\tilde{\theta}$ is an isomorphism, the previous result is equivalent to (\ref{eqlema}).
Conversely, if (\ref{eqlema}) holds, going backwards in the previous calculation leads to
$$\gamma(X_+,Y_-)=\pm g(X,Y_-)=\pm\gamma(HX,Y_-)= \pm\gamma(pr_{L_+}HX,Y_-),$$
whence $pr_{L_+}(X\mp HX)=0$. Therefore, $X\mp HX=2pr_{V_\mp}X\in L_-\cap V_\pm$ must vanish and $X\in V_\pm$. \end{proof}
\begin{prop}\label{expriota} The isomorphisms $\iota_\pm:L\rightarrow V_\pm$ are given by
\begin{equation}\label{eqiota} \iota_\pm Z=Z-\tilde{\theta}^{-1}(\psi_+\mp Id)Z
\hspace{2mm}(Z\in L=L_+).\end{equation}
The pullback of the metrics $g|_{V_\pm}$ by $\iota_\pm$ to $L$ are equal to the metric $k$ of $L$. \end{prop}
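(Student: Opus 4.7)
The first half of the statement is immediate from Lemma \ref{lemaiota}. By construction $\iota_\pm$ is the inverse of the projection $\tau_\pm = pr_L|_{V_\pm}$, so $\iota_\pm Z$ is the unique element of $V_\pm$ whose $L_+$-component is $Z$; the lemma then forces its $L_-$-component to be $-\tilde{\theta}^{-1}(\psi_+\mp Id)Z$, which is exactly (\ref{eqiota}).

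For the pullback claim, my plan is to avoid computing $g$ directly via (\ref{gcumatricea}) and instead exploit (\ref{ggamma}), which gives $g|_{V_\pm}=\pm\gamma|_{V_\pm}$. The task therefore reduces to establishing the $\gamma$-identity
$$ \pm\gamma(\iota_\pm Z,\iota_\pm W)=k(Z,W), \qquad Z,W\in L.$$
Writing $\iota_\pm Z=Z+X_-$ with $X_-=-\tilde{\theta}^{-1}(\psi_+\mp Id)Z$, and $\iota_\pm W=W+Y_-$ analogously, the $\gamma$-isotropy of $L_\pm$ annihilates $\gamma(Z,W)$ and $\gamma(X_-,Y_-)$, so only the two cross-terms remain. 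Splitting off the contributions that do not involve $\psi_+$ yields $\pm 2\gamma(\tilde{\theta}^{-1}Z,W)$, and this equals $\pm k(Z,W)$ by (\ref{defk}); at this stage I would also note that $\gamma(\tilde{\theta}^{-1}\cdot,\cdot)$ is symmetric on $L_+$, a direct consequence of the symmetry of $g|_{L_-}$ recorded in the proof of Proposition \ref{existence}.

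The main algebraic step, and the only delicate point, is to check that the leftover $\psi_+$-dependent part, namely
$$ -\gamma(Z,\tilde{\theta}^{-1}\psi_+ W)-\gamma(\tilde{\theta}^{-1}\psi_+ Z,W),$$
vanishes. I would show it is equal to its own negative by performing three moves in sequence: first, transfer $\tilde{\theta}^{-1}$ from one slot to the other using the symmetry of $\gamma(\tilde{\theta}^{-1}\cdot,\cdot)$ on $L_+$; second, transpose $\psi_+$ into $\psi_-$ via the identity $\gamma(\psi_+ X_+,Y_-)=\gamma(X_+,\psi_- Y_-)$ that holds because $\psi_\pm$ are mutual $\gamma$-transposes; third, invoke the third relation of (\ref{Hprodus}) in the rearranged form $\psi_-\tilde{\theta}^{-1}=-\tilde{\theta}^{-1}\psi_+$. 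The resulting expression is the negative of the original, so both must be zero. Combining this vanishing with the sign from $g|_{V_\pm}=\pm\gamma|_{V_\pm}$ gives $g(\iota_\pm Z,\iota_\pm W)=k(Z,W)$ simultaneously for both signs, which is the content of the proposition. The careful bookkeeping of signs and transpositions in the cancellation is the only nontrivial element; everything else is a direct application of the formulas already established.
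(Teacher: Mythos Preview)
Your proof is correct and follows essentially the same route as the paper's own argument: both pass from $g$ to $\pm\gamma$ via (\ref{ggamma}), reduce to the two cross-terms by $\gamma$-isotropy of $L_\pm$, and then eliminate the $\psi_+$-contribution using the $\gamma$-symmetry of $\tilde{\theta}^{-1}$, the transposition $\psi_+\leftrightarrow\psi_-$, and the third relation of (\ref{Hprodus}) in the form $\psi_-\tilde{\theta}^{-1}=-\tilde{\theta}^{-1}\psi_+$. The only cosmetic difference is that you separate the identity and $\psi_+$ pieces and argue the latter equals its own negative, while the paper carries out the same manipulations in a single chain of equalities.
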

\begin{proof} By definition, we have $(\iota_\pm Z)_+=Z$ and, since $(\iota_\pm Z)_-$ is given by (\ref{eqlema}), we get (\ref{eqiota}).
The second assertion is proven by the following calculation that uses
(\ref{eqiota}), the $\gamma$-symmetry of $\tilde{\theta},\tilde{\theta}^{-1}$, the $\gamma$-transposition between $\psi_+,\psi_-$ and conditions (\ref{Hprodus}):
$$ \begin{array}{l}
g(Z_1-\tilde{\theta}^{-1}(\psi_+\mp Id)Z_1,Z_2-\tilde{\theta}^{-1}(\psi_+\mp Id)Z_2)
\vspace*{2mm}\\ = \pm\gamma(Z_1-\tilde{\theta}^{-1}(\psi_+\mp Id)Z_1,Z_2-\tilde{\theta}^{-1}(\psi_+\mp Id)Z_2)
\vspace*{2mm}\\ =\mp(\gamma(Z_1,\tilde{\theta}^{-1}(\psi_+\mp Id) Z_2)+\gamma(Z_2,\tilde{\theta}^{-1}(\psi_+\mp Id)Z_1))
\vspace*{2mm}\\ =\mp(\gamma(Z_2,(\psi_-\mp Id)\tilde{\theta}^{-1}Z_1)+\gamma(Z_2,\tilde{\theta}^{-1}(\psi_+\mp Id)Z_1))
\vspace*{2mm}\\ =2\gamma(\tilde{\theta}^{-1}Z_1,Z_2)=k(Z_1,Z_2).
\end{array}$$
\end{proof}
\begin{corol} The compatible metric $g$ is Riemannian iff the metric $k$ is positive definite on $L$. \end{corol}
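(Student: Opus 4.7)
My plan is to leverage Proposition \ref{expriota} together with the $g$-orthogonal decomposition $TM=V_+\oplus V_-$ established via (\ref{ggamma}) in Proposition \ref{propH}. Since $g$ restricted to $V_\pm$ equals $\pm\gamma|_{V_\pm}$ and $V_+\perp_g V_-$, the positive-definiteness of $g$ on $TM$ reduces to positive-definiteness on each summand $V_+$ and $V_-$ separately.

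For the forward direction, I would assume $g$ is Riemannian. Then $g|_{V_+}$ and $g|_{V_-}$ are both positive definite. Pulling $g|_{V_+}$ back to $L$ via the isomorphism $\iota_+$ yields $k$ by Proposition \ref{expriota}, so $k$ is positive definite on $L$.

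For the converse, assume $k$ is positive definite. Since $\iota_\pm:L\to V_\pm$ are isomorphisms and $\iota_\pm^*(g|_{V_\pm})=k$ (both pullbacks giving the \emph{same} metric $k$, as stated in Proposition \ref{expriota}), both $g|_{V_+}$ and $g|_{V_-}$ are positive definite. The $g$-orthogonality $V_+\perp_g V_-$ from (\ref{ggamma}) then lets me conclude that for any nonzero $X=X^++X^-$ with $X^\pm\in V_\pm$, one has $g(X,X)=g(X^+,X^+)+g(X^-,X^-)>0$, so $g$ is Riemannian on $TM$.

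The argument is essentially a packaging of already-established facts, so I do not expect any serious obstacle; the only point worth underlining is that the \emph{same} metric $k$ arises as the pullback from both $V_+$ and $V_-$ — without this, positive-definiteness of $k$ would only give us one of the two summands and we would need a separate argument for the other. This uniform identification is precisely what Proposition \ref{expriota} provides, and it makes the corollary immediate.
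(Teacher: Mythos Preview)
Your proof is correct and follows essentially the same approach as the paper: the paper's one-line proof simply invokes Proposition \ref{expriota} together with the $g$-orthogonality of $V_\pm$, and you have spelled out exactly this reasoning in detail. Your emphasis on the fact that the \emph{same} metric $k$ is recovered from both $V_+$ and $V_-$ is the key point that makes the converse direction work.
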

\begin{proof} The result follows from the proposition because the subbundles $V_\pm$ are $g$-orthogonal.\end{proof}

A particular type of compatible metrics is introduced by the following definition.
\begin{defin}\label{strictcomp} {\rm A compatible metric $g$ of the (almost) para-Hermitian manifold $(M,F,\gamma)$ is {\it strongly compatible} if the subundles $L=L_+,\bar{L}=L_-$ are orthogonal with respect to $g$.}\end{defin}

Since the intersection of the subbundles $L_\pm$ is $0$, if $g$ is strongly compatible, $g|_{L_\pm}$ necessarily are non degenerate. Furthermore, the $\gamma$-isotropy of $L_\pm$ implies $H(L_\pm)=L_\mp$ and, in (\ref{matriceaH}), we have $\psi_\pm=0,\tilde{\theta}=\theta^{-1}$. By the isotropy argument, we also see that a compatible metric such that $H(L)=\bar{L}$ is strongly compatible. Of course, each of the conditions $\psi_+=0$, $\psi_-=0$, equivalently $\beta=0$, is equivalent with strong compatibility; these conditions show that the metric $g$ bijectively corresponds to a non degenerate metric $k$ on $L$. The condition $H(L)=\bar{L}$ is equivalent with $FHF=-H$, i.e., $FH=-HF$, therefore, the latter is just another form of the strong compatibility condition. Furthermore, the anti-commutation of $H,F$ is equivalent to $K^2=-Id$ ($K=HF$) and $(g,K)$ is an almost Hermitian structure, while the pairs $(F,K),(H,K)$ are almost para-hypercomplex structures \cite{AMT}.

Examples of compatible metrics on some well known para-Hermitian manifolds will be given in the last section.
\section{The C-bracket}
In double field theory, in order to encompass T-duality, one uses a bracket of two vector fields that is different from the usual Lie bracket and is called the C-bracket \cite{{HHZ},{V0}}. In this section we show that this bracket extends to all para-Hermitian manifolds.

First, we recall some basic facts concerning Courant and metric algebroids \cite{{LWX},{U},{V0}}. Below, the upper index $*$ denotes the dual bundle, $\Gamma$ denotes the space of sections, $\odot$ denotes symmetric tensor product and the front index $t$ denotes transposition.
\begin{defin}\label{defalgL} {\rm Let $E\rightarrow M$ be a vector bundle  endowed with a non degenerate metric
$g\in\Gamma\odot^2E^*$ and an {\it anchor} morphism $\rho:E\rightarrow TM$, which produces the morphism $\partial=(1/2)\sharp_g\circ
\hspace*{1pt}^t\hspace*{-1pt}\rho:T^*M\rightarrow E$.
A {\it Dorfman bracket} is an $\mathds{R}$-bilinear operation $\bigstar:\Gamma E\times\Gamma E\rightarrow\Gamma E$ that satisfies the properties:
$$\begin{array}{l}
1)\;(\rho e)(g(e_1,e_2))=g(e\bigstar e_1,e_2)+g(e_1,e\bigstar e_2),\vspace*{2mm}\\
2)\;e\bigstar e=\partial(g(e,e))\hspace{2mm}(\partial f=\partial(df),\,\forall f\in C^\infty(M))\vspace*{2mm}\\ 3)\; e\bigstar(e_1\bigstar e_2)=(e\bigstar e_1)\bigstar e_2 +e_1\bigstar(e\bigstar e_2).
\end{array}$$
The quadruple $(E,g,\rho,\bigstar)$ is called a {\it Courant algebroid}. Axiom 1) is the $g$-{\it compatibility} axiom, axiom 2) is the {\it normalization} axiom and axiom 3) is the {\it Leibniz axiom}. If only axioms 1), 2) are required, the product is a {\it metric product} and the quadruple $(E,g,\rho,\bigstar)$ is a {\it metric algebroid}.}\end{defin}

The definition of $\partial$ is equivalent to
$$g(\partial f,e)=\frac{1}{2}(\rho e)f.$$
The metric product satisfies the properties \cite{{U},{V0}}
\begin{equation}\label{propcrg}\begin{array}{l}
a)\;e_1\bigstar(fe_2)=f(e_1\bigstar e_2)+((\rho e_1)f)e_2,\vspace*{2mm}\\
b)\;(fe_1)\bigstar e_2=f(e_1\bigstar e_2)-((\rho e_2)f)e_1 +2g(e_1,e_2)\partial f.\end{array}\end{equation}

Furthermore \cite{V0}, $(E,g,\rho)$ is a metric algebroid iff there exists an $\mathds{R}$-bilinear, skew symmetric bracket
$[\,,\,]:\Gamma E\times\Gamma E\rightarrow\Gamma E$, called a {\it metric bracket}, which satisfies the axiom
$$(\rho e)(g(e_1,e_2))=g([e,e_1]+\partial(g(e,e_1)),e_2)+g(e_1,[e,e_2]
+\partial(g(e,e_2))).$$
The product and the bracket reciprocally define each other by the relation
$$ [e_1,e_2]=e_1\bigstar e_2-\partial(g(e_1,e_2))$$
and properties a), b) are equivalent with
\begin{equation}\label{lincroset} [e_1,fe_2]=f[e_1,e_2]+(\rho e_1)(f)e_2
-g(e_1,e_2)\partial f.\end{equation}
If the product that corresponds to a given metric bracket also satisfies the Leibniz axiom, the bracket satisfies the identity
\begin{equation}\label{JacobiC} \sum_{Cycl}[[e_1,e_2],e_3]= \frac{1}{3}\partial\sum_{Cycl}g([e_1,e_2],e_3)\end{equation}
and it is a {\it Courant bracket}, while the algebroid is a Courant algebroid.

We also need to recall the Courant algebroid structure of the {\it big tangent bundle of a foliation} $ \mathcal{L}$ of a manifold $M$ \cite{VBuc}. This bundle is $\mathbf{L}=\mathbf{T}M^{\mathcal{L}}=L\oplus (T^*M/ann\,L)$ where $L=T\mathcal{L}$ and $ann$ denotes the annihilator bundle. The restriction of the Courant bracket of the big tangent bundle of the manifold $M^{\mathcal{L}}=\sum({\rm leaves\, of}\, \mathcal{L})$, defined in \cite{C}, to vector fields and $1$-forms that are differentiable with respect to the original $C^\infty$-structure of $M$ makes $\mathbf{L}$ into a Courant algebroid with anchor $pr_L$ and with the structure given by
$$ g_L((Y_1,\tilde{\alpha}_{1}),
(Y_2,\tilde{\alpha}_{2}))=\frac{1}{2}(\alpha_1(Y_2)+\alpha_2(Y_1)),
$$
$$ \partial f=\frac{1}{2}\sharp_{g_L}\hspace{1pt}^t\hspace{-1pt}\rho(df)=
(0,[df]_{ann\,L}),$$
$$ \begin{array}{rcl}
[(Y_1,\tilde{\alpha}_{1}),(Y_2,\tilde{\alpha}_{2})]&
=&([Y_1,Y_2],[(\mathfrak{L}_{Y_1}\alpha_2-
\mathfrak{L}_{Y_2}\alpha_1\vspace*{2mm}\\&+&\frac{1}{2}d(\alpha_1(Y_2)-
\alpha_2(Y_1))]_{ann\,L}),\end{array}$$
where $Y_1,Y_2\in\Gamma L$, $\alpha_1,\alpha_2\in\Omega^1(M)$, $f\in C^\infty(M)$, $\tilde{\alpha}=[\alpha]_{ann\,L}$ is the class modulo $ann\,L$ and $\mathfrak{L}$ is the Lie derivative; the results remain unchanged if $\alpha_{l}\mapsto
\alpha_{l}+\gamma_{l}$ with $\gamma_{l}\in ann\,L$.

Now, we refer to para-Hermitian manifolds and prove the following proposition.
\begin{prop}\label{propTMCourant} Let $(M,\gamma,F)$ be a para-Hermitian manifold. Then, the tangent bundle $TM$ is endowed with two natural structures of a Courant algebroid defined by its Lagrangian foliations $L,\bar{L}$.\end{prop}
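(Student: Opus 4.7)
The plan is to realize $TM$, as a vector bundle, as the big tangent bundle $\mathbf{L}_\pm$ of each Lagrangian foliation $\mathcal{L}_\pm$, and then transport the Courant algebroid structure recalled above from \cite{VBuc}. Since $L_+$ and $L_-$ are each $\gamma$-isotropic of rank $n$ and $\gamma$ is non degenerate, the restriction of $\gamma$ to $L_+\times L_-$ is a non degenerate pairing. Consequently, $\flat_\gamma$ sends $\Gamma L_\mp$ into $\Omega^1(M)$ and, reducing modulo $ann\,L_\pm$, induces a bundle isomorphism $L_\mp \simeq T^*M/ann\,L_\pm$. Combining this with $TM=L_+\oplus L_-$ yields two bundle isomorphisms
$$ \Phi_\pm : TM \longrightarrow \mathbf{L}_\pm = L_\pm\oplus (T^*M/ann\,L_\pm),\quad X_++X_-\longmapsto \bigl(X_\pm,[\flat_\gamma X_\mp]_{ann\,L_\pm}\bigr). $$

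Next, I would compute what the Courant algebroid data $(g_{L_\pm},\,pr_{L_\pm},\,[\,,\,])$ on $\mathbf{L}_\pm$ pull back to under $\Phi_\pm$. The anchor obviously transports to the projection $pr_{L_\pm}:TM\to L_\pm$ afforded by the decomposition $TM=L_+\oplus L_-$. For the fibre metric, substituting $\alpha_i=\flat_\gamma X_\mp^i$ into the formula $g_{L_\pm}((Y_1,\tilde\alpha_1),(Y_2,\tilde\alpha_2))=\frac{1}{2}(\alpha_1(Y_2)+\alpha_2(Y_1))$ and using the $\gamma$-isotropy of $L_\pm$, one finds $\Phi_\pm^*g_{L_\pm}=\frac{1}{2}\gamma$. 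The bracket on $\mathbf{L}_\pm$ is manifestly invariant under the replacement of a $1$-form representative by one differing by an element of $ann\,L_\pm$, so its pullback under $\Phi_\pm$ is a well defined $\mathbb{R}$-bilinear operation on $\Gamma TM$, whose $L_\pm$-part is the Lie bracket of vector fields tangent to $\mathcal{L}_\pm$ and whose $L_\mp$-part is the $\gamma$-transfer of the Lie-derivative expression.

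Since $\Phi_\pm$ are bundle isomorphisms, the Courant algebroid axioms for $\mathbf{L}_\pm$ proved in \cite{VBuc} are automatically inherited, furnishing $TM$ with two Courant algebroid structures, one for each Lagrangian foliation, as asserted. The only non-formal step is the metric identification $\Phi_\pm^*g_{L_\pm}=\frac{1}{2}\gamma$, which I expect to be the main technical hinge of the argument; the $g$-compatibility, normalization and Jacobi/Leibniz identities then transport mechanically along $\Phi_\pm$. I do not foresee any hidden obstacle, because the construction of the big tangent bundle of a foliation was specifically designed to encode, via a Lagrangian splitting, the kind of data that a para-Hermitian structure provides intrinsically on $TM$.
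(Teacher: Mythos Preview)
Your proposal is correct and follows essentially the same approach as the paper: identify $TM$ with the big tangent bundle $\mathbf{L}_\pm$ of each Lagrangian foliation via the non-degenerate pairing between $L_+$ and $L_-$, then transport the Courant algebroid structure from \cite{VBuc}. The only cosmetic difference is that the paper carries out the identification using $\flat_\omega$ rather than $\flat_\gamma$; since $\omega(X,Y)=\gamma(X,FY)$ and $F|_{L_\pm}=\pm Id$, one has $\flat_\omega|_{\bar L}=\flat_\gamma|_{\bar L}$ and $\flat_\omega|_{L}=-\flat_\gamma|_{L}$, so the two isomorphisms coincide (up to the sign absorbed by the paper's prescription ``replace $\omega$ by $-\omega$'' when switching $L\leftrightarrow\bar L$). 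The paper then goes further than you do, writing out explicit formulas for the transported anchor, $\partial$, Courant bracket and Dorfman product in terms of $\omega$, which are needed later; for the bare statement of the proposition, your transport argument suffices.
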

\begin{proof}
Since the fundamental form $\omega$ defines a non degenerate pairing
$L_x\times\bar{L}_x\rightarrow\mathds{R}$ ($x\in M$),
we have an isomorphism $\flat_\omega:\bar{L}\rightarrow ann\,\bar{L}\approx L^*$. (As usually, $\flat_\omega X=i(X)\omega$ and we will also use $\sharp_\omega=\flat_\omega^{-1}$ below.)

Accordingly, the tangent bundle $TM=L\oplus\bar{L}$ may be identified with $\mathbf{L}$ by
$$ \begin{array}{l}
X\,\mapsto\,(X_L,\flat_{\omega} X_{\bar{L}})),\,(\flat_{\omega} X_{\bar{L}}\in ann\,\bar{L}),\vspace*{2mm}\\
(Y,\alpha)\,\mapsto\,Y+\sharp_\omega\alpha,\,(Y\in L,\alpha\in ann\,\bar{L}),\end{array}$$
where $X_L=X_+=pr_LX,\,X_{\bar{L}}=X_-=pr_{\bar{L}}
X$, and the Courant algebroid structure of $\mathbf{L}$ transfers to $TM$. The resulting Courant algebroid structure has anchor $pr_L$ and the following operations:
\begin{equation}\label{gLomega}
g_L(X,Y)=\frac{1}{2}(\omega(X_{\bar{L}},Y_L)
+\omega(Y_{\bar{L}},X_L))=(1/2)\gamma(X,Y), \end{equation}
\begin{equation}\label{partialf2}
\partial_{(L,\bar L)}f=\sharp_\omega(pr_{ann\,\bar{L}}df),
\end{equation}
\begin{equation}\label{Ccuomega}\begin{array}{rcl}
[X,Y]_{(L,\bar L)}&=&
[X_L,Y_L] +\sharp_\omega pr_{ann\,\bar{L}}[ \mathfrak{L}_{X_L}i(Y_{\bar L})
\omega\vspace*{2mm}\\ &-&\mathfrak{L}_{Y_L}i(X_{\bar L})\omega
+\frac{1}{2}d(\omega(X,Y))],\end{array}
\end{equation}
\begin{equation}\label{Dcuomega}\begin{array}{rcl}
X\bigstar_{(L,\bar L)}Y&=&
[X_L,Y_L] +\sharp_\omega pr_{ann\,\bar{L}}[ \mathfrak{L}_{X_L}i(Y_{\bar L})
\omega\vspace*{2mm}\\ &-&\mathfrak{L}_{Y_L}i(X_{\bar L})\omega
+d(\omega(X_{\bar L},Y_L))].\end{array}
\end{equation}

The classical commutation formula $$i([X,Y])=\mathfrak{L}_Xi(Y)-i(Y)\mathfrak{L}_X$$
changes the previous expressions into
\begin{equation}\label{Ccuomega1}\begin{array}{l}
[X,Y]_{(L,\bar L)}=
[X_L,Y_L] +pr_{\bar L}([X_L,Y_{\bar L}]+[X_{\bar L},Y_L])\vspace*{2mm}\\
+\sharp_\omega pr_{ann\,\bar{L}}[ i(Y_{\bar L})\mathfrak{L}_{X_L}\omega
 -i(X_{\bar L})\mathfrak{L}_{Y_L}\omega
+\frac{1}{2}d(\omega(X,Y))],\end{array}
\end{equation}
\begin{equation}\label{Dcuomega1}\begin{array}{l}
X\bigstar_{(L,\bar L)}Y=
[X_L,Y_L] +pr_{\bar L}([X_L,Y_{\bar L}]+[X_{\bar L},Y_L])\vspace*{2mm}\\
+\sharp_\omega pr_{ann\,\bar{L}}[ i(Y_{\bar L})\mathfrak{L}_{X_L}\omega
 -i(X_{\bar L})\mathfrak{L}_{Y_L}\omega
+d(\omega(X_{\bar L},Y_L))].\end{array}
\end{equation}

If we switch between $L$ and $\bar{L}$ and replace $\omega$ by $-\omega$, we similarly get a second Courant algebroid structure with anchor $pr_{\bar{L}}$, with the same metric $(1/2)\gamma$ and with the operations
$$
\partial_{(\bar L,L)}f=-\sharp_\omega(pr_{ann\,L}df),
$$
$$\begin{array}{l}
[X,Y]_{(\bar L,L)}=
[X_{\bar{L}},Y_{\bar{L}}] +pr_L([X_{\bar{L}},Y_L]+[X_L,Y_{\bar{L}}])\vspace*{2mm}\\
+\sharp_\omega pr_{ann\,L}[ i(Y_L)\mathfrak{L}_{X_{\bar{L}}}\omega
 -i(X_L)\mathfrak{L}_{Y_{\bar{L}}}\omega
+\frac{1}{2}d(\omega(X,Y))],\end{array}
$$
$$\begin{array}{l}
X\bigstar_{(\bar{L},L)}Y=
[X_{\bar{L}},Y_{\bar{L}}] +pr_L([X_{\bar{L}},Y_L]+[X_L,Y_{\bar{L}}])\vspace*{2mm}\\
+\sharp_\omega pr_{ann\,L}[ i(Y_L)\mathfrak{L}_{X_{\bar{L}}}\omega
 -i(X_L)\mathfrak{L}_{Y_{\bar{L}}}\omega
+d(\omega(X_L,Y_{\bar{L}}))].\end{array}
$$
\end{proof}

Notice the following consequence of the first formula (\ref{gLomega}):
\begin{equation}\label{bemol}
 \begin{array}{l}\flat_{g_L}|_{\bar L}=\frac{1}{2}\flat_{\omega}|_{\bar L},\, \flat_{g_L}|_L=-\frac{1}{2}\flat_{\omega}|_L,\vspace*{2mm}\\
\sharp_{g_L}|_{ann L}=-2\sharp_\omega|_{ann L},\, \sharp_{g_L}|_{ann \bar{L}}=2\sharp_\omega|_{ann \bar{L}}.\end{array}\end{equation}
\begin{corol}\label{corolsuma} The tangent bundle of the para-Hermitian manifold $(M,\gamma,F)$ is endowed with a natural structure of a metric algebroid with metric $\gamma$ and anchor equal to the identity map.\end{corol}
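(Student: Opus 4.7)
The natural candidate is simply to add the two Courant algebroid structures produced in Proposition \ref{propTMCourant}. Since the two anchors $pr_L$ and $pr_{\bar L}$ are complementary projections whose sum is the identity of $TM$, and both structures share the common metric $g_L=(1/2)\gamma$, I would define on $TM$ the anchor $\rho=Id_{TM}$, the metric $g=\gamma=2g_L$, and the metric product
\[
e_1\bigstar e_2 \;=\; e_1\bigstar_{(L,\bar L)}e_2 \;+\; e_1\bigstar_{(\bar L,L)}e_2,
\]
with associated skew bracket $[e_1,e_2]=e_1\bigstar e_2-\partial(\gamma(e_1,e_2))$. The point of the proof is then to check that axioms 1) and 2) of Definition \ref{defalgL} follow by termwise addition from the corresponding axioms for the two Courant structures, with all factors of two working out correctly.

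For the compatibility axiom, writing the two Courant axioms for $\bigstar_{(L,\bar L)}$ and $\bigstar_{(\bar L,L)}$ with the shared metric $g_L$ and adding them, the left-hand sides give $(pr_L e+pr_{\bar L}e)(g_L(e_1,e_2))=e(g_L(e_1,e_2))$, while the right-hand sides combine to $g_L(e\bigstar e_1,e_2)+g_L(e_1,e\bigstar e_2)$. Multiplying through by $2$ converts $g_L$ into $\gamma$ and yields exactly axiom 1) for $(Id_{TM},\gamma,\bigstar)$.

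For the normalization axiom, I need the anchored differential $\partial$ of the proposed structure. With $\rho=Id$ and metric $\gamma$, one has $\partial=(1/2)\sharp_\gamma\circ d$. On the other hand, using (\ref{partialf2}) for both orientations and the scaling identities (\ref{bemol}),
\[
\partial_{(L,\bar L)}+\partial_{(\bar L,L)} \;=\; (1/2)\sharp_{g_L}\bigl(pr_{ann\,\bar L}+pr_{ann\,L}\bigr)\circ d \;=\; (1/2)\sharp_{g_L}\circ d \;=\; \sharp_\gamma\circ d,
\]
since $\sharp_{g_L}=2\sharp_\gamma$. Adding the two normalization identities $e\bigstar_{(L,\bar L)}e=\partial_{(L,\bar L)}(g_L(e,e))$ and $e\bigstar_{(\bar L,L)}e=\partial_{(\bar L,L)}(g_L(e,e))$ and using $g_L=(1/2)\gamma$ together with $\mathds{R}$-linearity of each $\partial_i$ gives $e\bigstar e=(1/2)\sharp_\gamma(d\gamma(e,e))=\partial(\gamma(e,e))$, which is axiom 2).

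I do not expect any genuine obstacle here; the only thing to keep track of is the factor of two between $g_L$ and $\gamma$, and the observation that the scaling $\sharp_{g_L}=2\sharp_\gamma$ precisely cancels the $1/2$ in the general formula for $\partial$, so that the summed $\partial_1+\partial_2$ is $\sharp_\gamma\circ d=2\partial$. I would remark at the end that one should \emph{not} expect the Leibniz identity (axiom 3) to survive the summation, so the structure is in general only a metric algebroid and not a Courant algebroid; this is consistent with the proposition being stated for a metric algebroid, and the failure of Leibniz is precisely what will be measured by the torsion-type invariant introduced in Section 4.
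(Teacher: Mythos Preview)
Your proof is correct and follows essentially the same approach as the paper: define $\bigstar_\omega=\bigstar_{(L,\bar L)}+\bigstar_{(\bar L,L)}$, obtain axiom 1) by adding the two compatibility identities (using $pr_L+pr_{\bar L}=Id$), and obtain axiom 2) by adding the two normalization identities while tracking the factor of two between $g_L$ and $\gamma$ via (\ref{bemol}). Your closing remark that the Leibniz failure ``is precisely what will be measured by the torsion-type invariant introduced in Section~4'' is a slight misattribution---the $\gamma$-torsion there is attached to a connection, not to the bracket---but this does not affect the proof itself.
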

\begin{proof}
Put $\bigstar_\omega=\bigstar_{(L,\bar L)}+\bigstar_{(\bar L,L)}$, i.e., \begin{equation}\label{starM} \begin{array}{l} X\bigstar_\omega Y=[X,Y]+\sharp_\omega pr_{ann\,\bar{L}}[d(\omega(X_{\bar L},Y_L))]\vspace*{2mm}\\ -\sharp_\omega pr_{ann\,L}[d(\omega(Y_{\bar L},X_L))] +\sharp_\omega pr_{ann\,\bar{L}}[ i(Y_{\bar L})\mathfrak{L}_{X_L}\omega \vspace*{2mm}\\ -i(X_{\bar L})\mathfrak{L}_{Y_L}\omega] +\sharp_\omega pr_{ann\,L}[ i(Y_L)\mathfrak{L}_{X_{\bar{L}}}\omega -i(X_L)\mathfrak{L}_{Y_{\bar{L}}}\omega].\end{array}\end{equation}
Since the sum of the two anchors is $pr_L+pr_{ \bar{L}}=Id$, if we add the metric compatibility properties of the terms, we get metric compatibility of $\bigstar_\omega$.
Finally, from (\ref{starM}) and (\ref{bemol}) we get
$$ X\bigstar_\omega X=\partial(\gamma(X,X))=\frac{1}{2}grad(\gamma(X,X)),
$$ where $\partial$ is defined by the metric $\gamma$ and the identity anchor.
\end{proof}

With the usual passage from the Dorfman to the Courant bracket, we get	a metric bracket, equal to the sum of the Courant brackets of the two foliations:
\begin{equation}\label{crM} \begin{array}{l} [X,Y]_\omega=[X,Y]+\frac{1}{2}\sharp_\omega[d(\omega(X,Y))] +\sharp_\omega pr_{ann\,\bar{L}}[ i(Y_{\bar L})\mathfrak{L}_{X_L}\omega \vspace*{2mm}\\ -i(X_{\bar L})\mathfrak{L}_{Y_L}\omega] +\sharp_\omega pr_{ann\,L}[ i(Y_L)\mathfrak{L}_{X_{\bar{L}}}\omega -i(X_L)\mathfrak{L}_{Y_{\bar{L}}}\omega].\end{array}\end{equation}

The Leibniz property does not hold for the bracket $[\,,\,]_\omega$ since it would imply that $(TM,\gamma,Id,\bigstar_\omega)$ is a Courant algebroid and
it would follow that
$$Id(X\bigstar_\omega Y)=[Id(X),Id(Y)],$$
which is not true.

Another negative result is that, if $M$ is a para-Hermitian manifold, the subbundles $L,\bar{L}$, seen as dual to each other via pairing by $\omega$ and with the Lie algebroid structures given by the Lie bracket, do not define a Lie bialgebroid. Indeed, formulas (\ref{Ccuomega1}), (\ref{Dcuomega1}) can be put in a nice form using the identification of $L^*$ with $\bar L$ and the Lie algebroid operations
$$ d_Lf=\sharp_\omega pr_{ann\,\bar{L}}df
\stackrel{(\ref{partialf2})}{=}\partial f,
\;\;f\in C^\infty(M),$$
$$ \mathfrak{L}^L_S\bar{U}=pr_{ \bar{L}}[S,\bar{U}]
+\sharp_\omega pr_{ann\,\bar{L}}[i( \bar{U})\mathfrak{L}_S\omega]
\;\;S\in L,\bar{U}\in\bar{L},$$
where $ \mathfrak{L}^L$ is the Lie derivative of $L$.
With these operations we get
$$ \begin{array}{l}
[X,Y]_{(L,\bar L)}=[X_L,Y_L] + \mathfrak{L}^L_{X_L}Y_{\bar L}-\mathfrak{L}^L_{Y_L}X_{\bar L}
+\frac{1}{2}d_L(\omega(X,Y)),\vspace*{2mm}\\ X\bigstar_{(L,\bar L)}Y=
[X_L,Y_L]+ \mathfrak{L}^L_{X_L}Y_{\bar L}-\mathfrak{L}^L_{Y_L}X_{\bar L}
+d_L(\omega(X_{\bar L},Y_L)).\end{array}
$$
Similar expressions hold for the operations with the index $(\bar{L},L)$, and if we add the two brackets we get an expression of $[\,,\,]_\omega$ that coincides with the expression of the Courant bracket of a Lie bialgebroid given in \cite{LWX}. However, we are not in the case of a Lie bialgebroid since, otherwise, $TM=L\oplus\bar{L}$ would be a Courant algebroid with identity anchor, which is impossible. Indeed, for a Courant algebroid $\rho\circ\partial=0$ \cite{LWX} and on $TM$ with $\rho=Id$ we would get $df=0$, $\forall f\in C^\infty(M)$.

We will show that the $\omega$-bracket is related to a more general bracket that will be needed for the construction of the field's action.

For any (pseudo)Riemannian manifold $(M,\gamma)$, $(TM,g=\gamma,\rho=Id)$ (hence, $\partial f=(1/2)grad\,f$) has a natural structure of a metric algebroid with the operations defined by \cite{V0}
\begin{equation}\label{Ccroset} X\bigstar_\gamma Y=[X,Y]_\gamma+\frac{1}{2}grad(\gamma(X,Y)),\;
[X,Y]_\gamma=[ X,Y]-X\wedge_{\nabla^0} Y, \end{equation} where \begin{equation}\label{wedge0}\gamma(Z,X\wedge_{\nabla^0} Y)
=\frac{1}{2}[\gamma(X,\nabla^0_ZY)-
\gamma(Y,\nabla^0_ZX)])\end{equation}
and $\nabla^0$ is the Levi-Civita connection of $\gamma$.
The metric compatibility of $\bigstar_\gamma$ means that we have
$$ Z(\gamma(X,Y))=\gamma(Z\bigstar_\gamma X,Y)+
\gamma(X,Z\bigstar_\gamma Y).$$

Further computations lead to the following equivalent expressions
\begin{equation}\label{crg} \begin{array}{rcl}
[X,Y]_\gamma&=&\frac{1}{2}\{[X,Y]+\sharp_\gamma( \mathfrak{L}_X(\flat_\gamma Y)- \mathfrak{L}_Y(\flat_\gamma X))\}
\vspace*{2mm}\\ &=&\frac{3}{2}[X,Y] +\sharp_\gamma(\flat_{ \mathfrak{L}_X\gamma}Y-\flat_{ \mathfrak{L}_Y\gamma}X)\vspace*{2mm}\\ &=&\frac{1}{2}\{[X,Y]+\sharp_\gamma(i(X)d(\flat_\gamma Y)-i(Y)d(\flat_\gamma X))\}.
\end{array}\end{equation}
The third equality follows directly from the first and the Cartan relation between the Lie derivative and the exterior differential. The proof of the first equality (\ref{crg}) is by checking that it produces the same value of $\gamma([X,Y]_\gamma,Z)$ as the original expression (\ref{Ccroset}). Notice also that formula (\ref{Ccroset}) yields
$$ (grad\,f)\bigstar_\gamma Y=\nabla^0_{grad\, f}Y-grad\,(Yf).$$
The Leibniz property does not hold for $\bigstar_\gamma$ for the same reason as in the case of $\bigstar_\omega$.
\begin{prop}\label{cazparaK} On a para-K\"ahler manifold the brackets $[\,,\,]_\omega$, $[\,,\,]_\gamma$ coincide.\end{prop}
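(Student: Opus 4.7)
The plan is to show that the two brackets agree by observing that their difference is a fully antisymmetric $(0,3)$-tensor, and then checking its vanishing on pure sections of the Lagrangian distributions. Both brackets are metric brackets for the metric algebroid $(TM, \gamma, Id)$ with common $\partial = \tfrac{1}{2}\,grad$: this holds by construction for $[\,,\,]_\gamma$ from (\ref{Ccroset}), and for $[\,,\,]_\omega$ it follows from Corollary \ref{corolsuma} together with the passage from the Dorfman to the metric bracket. Since both satisfy (\ref{lincroset}) with the same data and are antisymmetric, the difference $D(X, Y) := [X, Y]_\omega - [X, Y]_\gamma$ is $C^\infty$-bilinear and antisymmetric. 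Subtracting the common metric compatibility axiom gives $\gamma(D(Z, X), Y) + \gamma(X, D(Z, Y)) = 0$, which combined with antisymmetry in the first pair makes $T(X, Y, Z) := \gamma(D(X, Y), Z)$ totally antisymmetric. By tensoriality, it suffices to verify $T = 0$ on triples of pure sections of $L$ or $\bar{L}$.

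I would first dispatch the ``unmixed'' cases where $X, Y$ lie in the same Lagrangian distribution. Assume $X, Y \in \Gamma L$ (the other case is symmetric). In formula (\ref{crM}) all terms involving $X_{\bar{L}}$ or $Y_{\bar{L}}$ vanish and $\omega(X, Y) = 0$ by Lagrangian isotropy, leaving $[X, Y]_\omega = [X, Y]$. On the other side, the para-K\"ahler identity $\nabla^0 F = 0$ implies $\nabla^0$ preserves $L$; hence $\nabla^0_Z X, \nabla^0_Z Y \in L$, and $\gamma$-isotropy of $L$ makes both terms in (\ref{wedge0}) vanish, giving $X \wedge_{\nabla^0} Y = 0$ and $[X, Y]_\gamma = [X, Y]$. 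Thus $D$ vanishes on $\Gamma L \times \Gamma L$ and, symmetrically, on $\Gamma \bar{L} \times \Gamma \bar{L}$.

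The heart of the proof is the mixed case, $X \in \Gamma L$ and $Y \in \Gamma \bar{L}$. Here I would use $d\omega = 0$ with Cartan's formula to rewrite every $\mathfrak{L}_{X_L} \omega$ in (\ref{crM}) as $d\, i_{X_L} \omega = d\, \flat_\omega X$, together with the musical identity $\flat_\omega = -\flat_\gamma \circ F$ that follows from $\omega(A, B) = \gamma(A, FB)$, which in turn implies $\flat_\omega = \flat_\gamma$ on $\bar{L}$ and $\flat_\omega = -\flat_\gamma$ on $L$ (consistent with (\ref{bemol})). This converts every occurrence of $\sharp_\omega$ and $\flat_\omega$ in (\ref{crM}) into expressions involving $\sharp_\gamma$ and $\flat_\gamma$. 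Finally, the identity $\nabla^0 \omega = 0$ (equivalent on a para-K\"ahler manifold to $d\omega = 0$ combined with $\nabla^0 F = 0$) allows all remaining Lie derivatives to be rewritten in terms of covariant derivatives, matching the expression produced by (\ref{wedge0}) for $X \wedge_{\nabla^0} Y$ in the formula (\ref{Ccroset}) for $[X, Y]_\gamma$.

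The principal obstacle is this last step. The formulas for $[X, Y]_\omega$ and $[X, Y]_\gamma$ are expressed in rather different languages --- one using $\omega$ together with projections onto the annihilator subbundles $ann\, L$ and $ann\, \bar{L}$, the other using $\gamma$ directly through its unrestricted musical isomorphisms --- and the challenge is the bookkeeping needed to match them term by term after conversion. The essential geometric input that makes the matching possible is the parallelism of $\omega$ under the Levi-Civita connection, guaranteed exactly by the para-K\"ahler hypothesis; the rest is routine algebraic manipulation.
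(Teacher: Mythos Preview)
Your approach is correct and genuinely different from the paper's. The paper does not use the total antisymmetry of the difference tensor nor the para-K\"ahler identity $\nabla^0 F=0$; instead it proves the stronger formula
\[
[X,Y]_\gamma=[X,Y]_\omega+\tfrac{1}{2}\,\sharp_\omega\bigl(i(Y)i(X)d\omega\bigr)
\]
valid on any para-Hermitian manifold, by directly computing $\omega([X,Y]_\gamma-[X,Y]_\omega,Z)$ on pure arguments using (\ref{crM}) and the third expression in (\ref{crg}), together with the splitting $d=d_L+d_{\bar L}$ and the relations (\ref{bemol}). The proposition then follows at once from $d\omega=0$. The paper's route thus yields an identity with independent interest (it is used afterwards to rewrite $[\,,\,]_\omega$ without reference to the $L,\bar L$ decomposition), whereas your route is more economical for the statement itself and makes transparent exactly where the para-K\"ahler hypothesis enters, via $\nabla^0 F=0$.

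One point worth noting: your ``heart of the proof'' is in fact already finished before you reach it. Once you know that $T(X,Y,Z)=\gamma(D(X,Y),Z)$ is a totally skew $3$-tensor and that $D$ vanishes on $\Gamma L\times\Gamma L$ and on $\Gamma\bar L\times\Gamma\bar L$, you are done: any triple of pure sections has two arguments lying in the same eigenbundle, and total antisymmetry transports the vanishing there. So the entire mixed-case discussion (Cartan's formula, $\nabla^0\omega=0$, the bookkeeping you worry about) can be deleted. Your argument is therefore shorter and cleaner than you realized.
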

\begin{proof}
We prove that the formula
\begin{equation}\label{paraHcr} [X,Y]_\gamma=[X,Y]_\omega+\frac{1}{2}\sharp_\omega[i(Y)i(X)d\omega]
\end{equation}
holds on any para-Hermitian manifold.
This follows by evaluating	 $\omega([X,Y]_\gamma-[X,Y]_\omega,Z)$ on arguments of type $(X_L,Y_L)$, $(X_{\bar{L}},Y_{\bar{L}})$, $(X_L,Y_{\bar{L}})$ while using (\ref{crM}) and the third expression (\ref{crg}). We shall also use the decompositions $T^*M=ann\,L\oplus ann\,\bar{L}$, $d=d_L+d_{\bar{L}}$, where $d_L,d_{\bar{L}}$ are the differentials of the Lie algebroids $L,\bar{L}$, and formulas (\ref{bemol}). The conclusion of the proposition follows from (\ref{paraHcr}) because $d\omega=0$ in the para-K\"ahler case.\end{proof}

Formulas (\ref{paraHcr}) and (\ref{crg}) yield expressions of $[X,Y]_\omega$ that do not use the decomposition of $X,Y$ into the $L,\bar{L}$ components. For instance, if we combine the third expression (\ref{crg}) with (\ref{paraHcr}) and notice that (\ref{bemol}) implies
$$ \flat_{g_L}=-\frac{1}{2}\flat_\omega\circ F,\, \sharp_{g_L}=-2F\circ\sharp_\omega,$$ we obtain the formula
$$\begin{array}{r} [X,Y]_\omega=\frac{1}{2}[X,Y]+
\frac{1}{2}F\sharp_\omega[i(X)di(FY)\omega\vspace*{2mm}\\ -i(Y)di(FX)\omega]
-\frac{1}{2}\sharp_\omega[i(Y)i(X)d\omega].\end{array}$$

The $\gamma$-bracket is a generalization of the C-bracket of double field theory. Indeed, it was shown in \cite{V0} that the C-bracket defined on flat para-K\"ahler manifolds with local coordinates $(x^i,\tilde{x}_j)$ and with $\omega=dx^i\wedge d\tilde{x}_i$ is the unique metric bracket equal to $0$ on pairs of vectors $\partial/\partial x^i,
\partial/\partial \tilde{x}_j$. In the flat para-K\"ahler case, formula (\ref{paraHcr}) yields these values of the $\gamma$-bracket, precisely. Thus, the $\gamma$-bracket is the {\it para-Hermitian C-bracket}. (The same reason would allow us to give this name to the $\omega$-bracket, but, the latter is not convenient for the construction of the action.)

Finally, we notice that, in the general case, the $\gamma$-metric product does not yield generalized gauge transformations similar to those defined in the K\"ahler flat case \cite{V0}. Instead, we may consider the {\it partial gauge transformations} $$ \mathfrak{T}^L_XY=X\bigstar_{(L,\bar{L})}Y,\;
\mathfrak{T}^{\bar{L}}_XY=X\bigstar_{(\bar{L},L)}Y,$$ which may be extended to tensor fields as in the flat case (\cite{V0}, formula (3.7)). The conditions $$\begin{array}{l}\mathfrak{T}^L_X\mathfrak{T}^L_Y
-\mathfrak{T}^L_Y\mathfrak{T}^L_X=
\mathfrak{T}^L_{X\bigstar_{(L,\bar{L})}Y},\vspace*{2mm}\\ \mathfrak{T}^{\bar{}L}_X\mathfrak{T}^{\bar{L}}_Y
-\mathfrak{T}^{\bar{L}}_Y\mathfrak{T}^{\bar{L}}_X=
\mathfrak{T}^{\bar{L}}_{X\bigstar_{(\bar{L},L)}Y}\end{array}$$
hold since the Dorfman bracket of a Courant algebroid satisfies the Leibniz identity.
\section{Field connections and action}
It is natural to expect the action of a field to be the integral of a scalar curvature analog of a connection derived from the field given by the compatible metric $g$ of the para-Hermitian manifold $(M,F,\gamma)$ and which preserves the neutral metric $\gamma$. We see the last condition as T-duality in the present framework and we will indicate a process that gives a canonical connection of the required type.

The connections that preserve the metrics $\gamma,g$ will be called {\it double metric connections}; generally, the Levi-Civita connections of $\gamma$ and $g$ are not double metric connections.
\begin{prop}\label{bimcon} Double metric connections are in a one-to-one correspondence with the pairs of $k$-metric connections on $L$, where $(k,\beta)$ are the $L$-components of the field $g$.\end{prop}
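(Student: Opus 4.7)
The plan is to pass from the compatible metric $g$ to the almost product structure $H$ provided by Proposition \ref{propH} and then decompose any $\gamma$- and $g$-preserving connection along the eigenbundles $V_\pm$ of $H$, transferring the pieces to $L$ via the isomorphisms $\iota_\pm$ of Proposition \ref{expriota}.

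First, I would observe that since $g$ is determined by $\gamma$ and $H$ through $g(X,Y)=\gamma(HX,Y)$, and $\gamma$ is non-degenerate, a linear connection $\nabla$ on $TM$ satisfies $\nabla\gamma=0$ and $\nabla g=0$ if and only if $\nabla\gamma=0$ and $\nabla H=0$. The condition $\nabla H=0$ is equivalent to saying that $\nabla$ preserves the decomposition $TM=V_+\oplus V_-$, i.e.\ $\nabla_X(\Gamma V_\pm)\subset\Gamma V_\pm$ for all $X\in\Gamma(TM)$. Hence $\nabla$ splits into a pair of linear connections $\nabla^\pm$ in the vector bundles $V_\pm\to M$.

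Next, using (\ref{ggamma}), the condition $\nabla\gamma=0$ decomposes: the cross-terms (for $Y_+\in\Gamma V_+$, $Y_-\in\Gamma V_-$) are automatically preserved, since $\gamma(Y_+,Y_-)=0$ and $\nabla_X Y_+\in\Gamma V_+$, $\nabla_X Y_-\in\Gamma V_-$ remain $\gamma$-orthogonal; so it suffices that $\nabla^\pm$ preserve $\gamma|_{V_\pm}$, equivalently (by $g|_{V_\pm}=\pm\gamma|_{V_\pm}$) that $\nabla^\pm$ preserve $g|_{V_\pm}$. Then Proposition \ref{expriota} gives $\iota_\pm^{\,*}(g|_{V_\pm})=k$, so transferring through the bundle isomorphisms $\iota_\pm:L\to V_\pm$ by
\begin{equation*}
D^\pm_X Z \;=\; \tau_\pm\!\bigl(\nabla^\pm_X(\iota_\pm Z)\bigr),\qquad X\in\Gamma(TM),\ Z\in\Gamma(L),
\end{equation*}
yields a pair of $k$-metric connections $D^\pm$ in the vector bundle $L\to M$. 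Conversely, a pair $(D^+,D^-)$ of $k$-metric connections on $L$ defines $\nabla^\pm$ on $V_\pm$ by $\nabla^\pm_X V=\iota_\pm\bigl(D^\pm_X(\tau_\pm V)\bigr)$, and $\nabla:=\nabla^+\oplus\nabla^-$ on $TM=V_+\oplus V_-$ is a linear connection that preserves $H$ and, by reversing the above reasoning, preserves $\gamma$ and $g$.

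The verifications are essentially bookkeeping; the only step requiring care is checking that the $\gamma$-compatibility of $\nabla$ really does split cleanly and that no additional constraint relating $\nabla^+$ and $\nabla^-$ is forced by the global metric condition. This is handled by the orthogonality observation in the second paragraph: because $V_+\perp_\gamma V_-$ and $\nabla$ preserves each summand, the off-diagonal component of $\nabla\gamma$ vanishes automatically. Once this is recorded, the two directions of the correspondence are mutually inverse by construction, which establishes the bijection.
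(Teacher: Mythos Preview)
Your proof is correct and follows essentially the same route as the paper: reduce the double metric condition to $\nabla\gamma=0$ together with $\nabla H=0$, split $\nabla$ along the eigenbundles $V_\pm$ of $H$, and transfer each piece to $L$ via the isomorphisms $\iota_\pm$ using $\iota_\pm^{*}(g|_{V_\pm})=k$ from Proposition \ref{expriota}. Your treatment is slightly more explicit than the paper's in justifying why the off-diagonal $\gamma$-compatibility is automatic once $V_\pm$ are preserved, but the argument is the same.
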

\begin{proof} The $L$-components $(k,\beta)$ were defined in Definition \ref{def2}.
Let $\nabla$ be a double metric connection on $TM$. Then, $\nabla$ also preserves $H$ and it must have the form
$\nabla=\nabla^++\nabla^-$, where the terms are connections on $V_\pm$. Accordingly, we get well defined operators $D^\pm_XY$, $X\in\Gamma TM,Y\in\Gamma L$, that satisfy the condition
\begin{equation}\label{defD} \nabla_X(\iota_\pm Y)=\iota_\pm(D^\pm_XY)\end{equation} and $D^\pm$ are connections on the vector bundle $L$. Moreover, since $g|_{V_\pm}=k$, the restriction of the condition $\nabla_Xg=0$ to $V_\pm$ is equivalent to $D^\pm_Xk=0$. Conversely, if we have a pair $D^\pm$ of $k$-metric connections on $L$ and we define $\nabla$ by (\ref{defD}), $\nabla$ preserves $H$ (since it preserves the eigenbundles $V_\pm$ of $H$), it preserves $g$ (since it preserves the restrictions $g|_{V_\pm}$ as well as the orthogonality condition $g|_{V_+\times V_-}=0$) and it preserves $\gamma$ (because of (\ref{defk})).\end{proof}
\begin{defin}\label{type12} {\rm A double metric connection such that $D^+=D^-=D$ will be called of {\it type $1$}. A double metric connection such that $D^+\neq D^-$ will be called of {\it type $2$}.}\end{defin}
\begin{prop}\label{proptype1} Any double metric connection of type 1 preserves the subbundle $\bar{L}$. It also preserves the subbundle $L$ iff the corresponding connection $D$ of $L$ commutes with the endomorphism $\psi_+$ of {\rm(\ref{matriceaH})}.\end{prop}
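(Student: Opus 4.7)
The plan is to work directly with the explicit formula $\iota_\pm Z = Z - \tilde{\theta}^{-1}(\psi_+ \mp Id)Z$ from Proposition \ref{expriota}, combined with the defining relation $\nabla_X(\iota_\pm Y) = \iota_\pm(D^\pm_X Y)$ specialized to the case $D^+ = D^- = D$. Subtracting the two formulas gives the key identity $\tilde{\theta}^{-1} Z = \tfrac{1}{2}(\iota_+ - \iota_-)Z$ for every $Z \in \Gamma L$; since $\tilde{\theta}^{-1}\colon L \to \bar{L}$ is an isomorphism, this expresses an arbitrary section of $\bar{L}$ in a form on which $\nabla$ can be computed. A one-line calculation then yields
$$\nabla_X(\tilde{\theta}^{-1}Z) = \tfrac{1}{2}(\iota_+ - \iota_-)(D_X Z) = \tilde{\theta}^{-1}(D_X Z) \in \Gamma \bar{L},$$
proving the first assertion and, as a byproduct, identifying the induced connection on $\bar{L}$ as the $\tilde{\theta}$-conjugate of $D$.

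For the second claim I would take $Z \in \Gamma L$ and decompose it according to the splitting $TM = V_+ \oplus \bar{L}$ (which is legitimate because $V_+ \cap L_- = 0$ and the ranks agree),
$$Z = \iota_+ Z + \tilde{\theta}^{-1}(\psi_+ - Id)Z.$$
Applying $\nabla_X$, using $D^+ = D$ on the first summand and the identity just established on the second, I obtain
$$\nabla_X Z = \iota_+(D_X Z) + \tilde{\theta}^{-1}\,D_X\bigl((\psi_+ - Id)Z\bigr).$$
Expanding $\iota_+(D_X Z) = D_X Z - \tilde{\theta}^{-1}(\psi_+ - Id)(D_X Z)$ and applying the Leibniz rule to $D_X(\psi_+ Z) = (D_X\psi_+)Z + \psi_+(D_X Z)$ collapses the cross-terms, leaving $L$-component equal to $D_X Z$ and $\bar{L}$-component equal to $\tilde{\theta}^{-1}(D_X \psi_+)Z$.

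Since $\tilde{\theta}^{-1}$ is an isomorphism, $\nabla_X Z \in \Gamma L$ for all $X$ and $Z$ is then equivalent to $D_X \psi_+ = 0$ for all $X$, which is exactly the condition that $D$ commute with $\psi_+$. I expect no real obstacle here: once the splitting $TM = V_+ \oplus \bar{L}$ is used to write $Z$ as $\iota_+ Z$ plus an $\bar{L}$-piece, the calculation is mechanical. The only subtlety worth flagging is that the obstruction term emerges as the $D$-covariant derivative of the bundle endomorphism $\psi_+$ rather than of the section $(\psi_+ - Id)Z$, which is what makes the final condition take the clean form stated in the proposition.
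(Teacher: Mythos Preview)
Your argument is correct and lands on exactly the same formula as the paper,
$$\nabla_X Y = D_X Y + \tilde{\theta}^{-1}\bigl(D_X(\psi_+ Y) - \psi_+(D_X Y)\bigr),\qquad Y\in\Gamma L,$$
from which the second assertion follows immediately. For the first assertion your use of $\tilde{\theta}^{-1}Z = \tfrac{1}{2}(\iota_+-\iota_-)Z$ is identical to the paper's (and your induced connection $\nabla_X W = \tilde{\theta}^{-1}D_X(\tilde{\theta}W)$ on $\bar{L}$ matches the paper's formula after the substitution $W=\tilde{\theta}^{-1}Z$).

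The only genuine difference is in the second step: the paper decomposes $Y\in\Gamma L$ with respect to $TM=V_+\oplus V_-$, writing
$$Y=\tfrac{1}{2}\bigl(\iota_+(Y+\psi_+Y)+\iota_-(Y-\psi_+Y)\bigr),$$
and then applies the defining relation (\ref{defD}) to each piece. You instead decompose along $TM=V_+\oplus\bar{L}$, writing $Y=\iota_+Y+\tilde{\theta}^{-1}(\psi_+-Id)Y$, which is simply the rearranged formula (\ref{eqiota}). Your choice has the mild advantage of feeding the first assertion directly back into the second (the $\bar{L}$-piece is handled by what you just proved), whereas the paper's $V_+\oplus V_-$ splitting keeps the two halves of the proposition logically independent and more symmetric. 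Both routes are equally short and yield the same obstruction term $(D_X\psi_+)Z$.
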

\begin{proof}
Formula (\ref{eqiota}) implies that $\forall Y\in L$ one has \begin{equation}\label{eqauxc}\begin{array}{l}
\iota_+Y-\iota_-Y=2\tilde{\theta}^{-1}Y\in \bar{L},
\vspace*{2mm}\\ \iota_+Y+\iota_-Y=2Y-2\tilde{\theta}^{-1}\psi_+Y.\end{array}\end{equation}
Thus, every vector of $\bar{L}$ is in the image of $\iota_+-\iota_-$ and conversely, which implies the first assertion. More precisely, the first formula (\ref{eqauxc}) yields
$$ \nabla_XZ=\tilde{\theta}^{-1}D_X(\tilde{\theta} Z),
\hspace{3mm}Z\in\bar{L}.$$
Furthermore, (\ref{eqauxc}) implies $$ Y=\frac{1}{2}\{\iota_+(Y+\psi_+Y) +\iota_-(Y-\psi_+Y)\}$$ and using (\ref{defD}) we get
$$ \nabla_XY=D_XY+\tilde{\theta}^{-1}(D_X(\psi_+Y)- \psi_+(D_XY))\;\;(Y\in\Gamma L).$$ This result implies the second assertion.
\end{proof}

We will define a canonical $k$-metric connection on $L$ by using some old ideas from foliation theory \cite{VCz}. If $D$ is a connection on $L$, we define the torsion $\tau^D\in Hom(TM\otimes TM,L)$ by
$$ \tau^D(X,Y)= D_X(pr_L Y)-D_Y(pr_L X)
-pr_L[X,Y].$$
\begin{lemma}\label{lematau} There exists a unique connection $D$ on $L$ such that $k$ is preserved by parallel translation along curves tangent to $L$ and $\tau^D(X,Y)=0$ if at least one of the arguments is in $L$.\end{lemma}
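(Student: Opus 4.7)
The plan is to observe that the two required conditions decouple into a \emph{leafwise} part (both arguments in $L$) and a \emph{transverse} part (first argument in $\bar L$, second in $L$), each of which determines $D$ uniquely on its piece of $TM \otimes L$; combining them by the decomposition $X = X_L + X_{\bar L}$ yields the connection.

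First I would establish uniqueness. Suppose $D$ satisfies the hypotheses. For $X,Y,Z\in\Gamma L$, the $k$-preservation along leaves gives $Xk(Y,Z)=k(D_XY,Z)+k(Y,D_XZ)$, while $\tau^D(X,Y)=0$ reduces (since $pr_L$ is the identity on $L$ and $[X,Y]\in\Gamma L$ by integrability) to $D_XY-D_YX=[X,Y]$. The standard Koszul manipulation then forces
\begin{equation*}
2k(D_XY,Z)=Xk(Y,Z)+Yk(X,Z)-Zk(X,Y)+k([X,Y],Z)-k([X,Z],Y)-k([Y,Z],X),
\end{equation*}
and non-degeneracy of $k$ on $L$ uniquely determines $D_XY$. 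For $X\in\Gamma\bar L$, $Y\in\Gamma L$, the condition $\tau^D(X,Y)=D_XY-pr_L[X,Y]=0$ forces $D_XY=pr_L[X,Y]$. Thus $D$ is determined on the two pieces, hence everywhere by $D_XY=D_{X_L}Y+D_{X_{\bar L}}Y$.

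For existence I would define $D$ by the two formulas above and verify that the result is a genuine connection on $L$. Tensoriality in $X$ and the Leibniz rule in $Y$ are routine: on the leafwise piece they are the classical consequences of the Koszul formula; on the transverse piece, for $X\in\Gamma\bar L$ and $f\in C^\infty(M)$,
\begin{equation*}
D_{fX}Y=pr_L[fX,Y]=f\,pr_L[X,Y]-(Yf)\,pr_LX=fD_XY
\end{equation*}
because $pr_LX=0$, and similarly $D_X(fY)=pr_L(f[X,Y]+(Xf)Y)=fD_XY+(Xf)Y$ because $Y\in\Gamma L$. The $k$-preservation along $L$ is built into the Koszul definition. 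For the torsion condition: if $X,Y\in\Gamma L$, Koszul gives $D_XY-D_YX=[X,Y]$, so $\tau^D(X,Y)=0$; if $X\in\Gamma L$, $Y\in\Gamma\bar L$, then
\begin{equation*}
\tau^D(X,Y)=-D_YX-pr_L[X,Y]=-pr_L[Y,X]-pr_L[X,Y]=0.
\end{equation*}

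The main substantive point, rather than an obstacle, is checking that the Koszul formula really defines an object on all of $M$ (not just a connection on each leaf separately): this hinges on the integrability of $L$, which keeps $[X,Z]$ and $[Y,Z]$ inside $\Gamma L$ so that the right-hand side pairs legitimately with $k$. Everything else is bookkeeping.
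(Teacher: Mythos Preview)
Your proof is correct and follows essentially the same approach as the paper: the paper's argument is simply a terse sketch of exactly what you wrote, first reading off $D_XY=pr_L[X,Y]$ for $X\in\bar L$, $Y\in L$ from the torsion condition, and then invoking the standard Koszul/Levi-Civita argument for $X,Y,Z\in\Gamma L$. You have merely filled in the details the paper omits (the connection axioms for the transverse piece, the explicit Koszul formula, and the remaining torsion case $X\in L$, $Y\in\bar L$).
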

\begin{proof} For $X\in \bar{L},Y\in L$, the vanishing of $\tau(X,Y)$ yields
$D_XY=pr_L[X,Y]$. Furthermore, the conditions $D_Zk(X,Y)=0,T^D(X,Y)=0$ for $Z,X,Y\in\Gamma L$ may be processed as in the case of a Riemannian connection, which yields a well defined expression of $D_XY$ for $X,Y\in\Gamma L$.\end{proof}

The resulting connection $D$ satisfies $T^D(X,Y)=0$ for any arguments; zero torsion for $X,Y\in\Gamma \bar{L}$ follows from the integrability of $\bar{L}$.

Now, for any connection $D$ on $L$, the formula
\begin{equation}\label{barD} D'_XY=D_XY+\frac{1}{2}\sharp_k\flat_{D_Xk}Y,
\hspace{5mm} X\in\Gamma TM,Y\in \Gamma L,\end{equation} yields a new connection such that $D'k=0$. In particular, the connection $D'$ associated to $D$ of Lemma \ref{lematau} is a canonical $k$-metric connection on $L$ such that $D'_XY=D_XY$ if $X,Y\in L$. The double metric connection $\nabla'$ associated to the pair $(D',D')$ will be called the {\it initial connection of type $1$}.

The construction of the initial connection of type 1 may be modified such as to also include the form component $\beta$. This modification is suggested by generalized geometry \cite{{G2},{V1}} and leads to an {\it initial connection of type} 2. The modification consists in replacing the condition $\tau^D(X,Y)=0$ of Lemma \ref{lematau} by the condition
\begin{equation}\label{bismut} \tau^D(X,Y)=\pm2\sharp_k [i(pr_LY)i(pr_LX)d_L\beta]. \end{equation}
Nothing changes if at least one argument is in $\bar{L}$ but half of the last term of (\ref{bismut}) is added to the covariant differential $D$ if $X,Y\in L$. In this way, and using again (\ref{barD}), we get a pair of $k$-metric connections $\hspace{1pt}^\beta\hspace{-2pt}D^\pm$ on $L$ and we define the initial connection of type 2 as the double metric connection $\hspace{1pt}^\beta\hspace{-1pt}\nabla$ on $M$ that is associated to this pair.

As in \cite{{G2},{V0}}, for a $\gamma$-metric connection $\nabla$ on a (pseudo-)Riemannian manifold $(M,\gamma)$,
we introduce an invariant, which we call the {\it $\gamma$-torsion} of $\nabla$, defined by
\begin{equation}\label{torsC} T^\nabla_\gamma(X,Y)=\nabla_XY-\nabla_YX-X\wedge_\nabla Y-[X,Y]_\gamma,\end{equation}
where $\wedge_\nabla$ is given by (\ref{wedge0}) with the Levi-Civita connection $\nabla^0$ replaced by $\nabla$. Without the term $-X\wedge_\nabla Y$ the result is not a tensor field, but, (\ref{propcrg}) shows that (\ref{torsC}) is a tensor field. Furthermore, using the metric compatibility of the $\gamma$-bracket, we can check the total skew-symmetry	of the {\it covariant $\gamma$-torsion} \cite{{G2},{V0}}
$$ \tau^\nabla_\gamma(X,Y,Z)=\gamma(T^\nabla_\gamma(X,Y),Z).$$ \begin{prop}\label{0gammators} The $\gamma$-metric connections with zero $\gamma$-torsion bijectively correspond to $3$-covariant tensor fields $\Xi$ on $M$ such that
\begin{equation}\label{condXi0}
\Xi(X,Y,Z)=-\Xi(X,Z,Y),\,\sum_{Cycl(X,Y,Z)}\Xi(X,Y,Z)=0.
\end{equation}\end{prop}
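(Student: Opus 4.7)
The plan is to parametrize all $\gamma$-metric connections relative to the Levi-Civita connection $\nabla^0$ and then reduce both the metricity condition and the vanishing of the $\gamma$-torsion to purely algebraic identities on the difference tensor.

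First I would observe that $\nabla^0$ itself has $T^{\nabla^0}_\gamma = 0$. Indeed, from $\nabla^0_XY-\nabla^0_YX = [X,Y]$ and the definition (\ref{Ccroset}) of $[\,,\,]_\gamma$, the four terms in (\ref{torsC}) cancel in pairs. This makes $\nabla^0$ a convenient base point for the affine space of $\gamma$-metric connections, so each such $\nabla$ is uniquely $\nabla^0 + A$ for a $(1,2)$-tensor $A$, and the condition $\nabla\gamma=0$ is equivalent to $A_X$ being $\gamma$-skew, i.e., to $\Xi(X,Y,Z):=\gamma(A_XY,Z)$ being skew in its last two arguments. This already matches the first condition in (\ref{condXi0}) and sets up the candidate bijection $\nabla\leftrightarrow\Xi$, whose inverse is just the raising of the last index of $\Xi$ by $\gamma$.

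Next I would compute $T^\nabla_\gamma$ in terms of $\Xi$. Since $\nabla_XY-\nabla_YX = [X,Y] + A_XY - A_YX$, the only nontrivial step is evaluating $X\wedge_\nabla Y - X\wedge_{\nabla^0}Y$. Applying (\ref{wedge0}) to both connections gives
$$\gamma(Z,\, X\wedge_\nabla Y - X\wedge_{\nabla^0}Y) = \tfrac{1}{2}\bigl[\Xi(Z,Y,X)-\Xi(Z,X,Y)\bigr] = -\Xi(Z,X,Y),$$
where the last equality uses the skew-symmetry of $\Xi$ in positions $2$ and $3$. Substituting into (\ref{torsC}) and invoking $T^{\nabla^0}_\gamma=0$ yields
$$\tau^\nabla_\gamma(X,Y,Z) = \Xi(X,Y,Z) - \Xi(Y,X,Z) + \Xi(Z,X,Y),$$
and a second application of the same skew-symmetry, $\Xi(Y,X,Z) = -\Xi(Y,Z,X)$, rewrites this as precisely $\sum_{Cycl(X,Y,Z)}\Xi(X,Y,Z)$. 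Hence $T^\nabla_\gamma=0$ is equivalent to the second condition in (\ref{condXi0}), completing the correspondence.

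The main obstacle is purely one of careful bookkeeping: keeping the factor $\tfrac{1}{2}$ from (\ref{wedge0}) and the signs in the difference $X\wedge_\nabla Y - X\wedge_{\nabla^0}Y$ straight, and then recognizing that the skew-symmetry of $\Xi$ in its last two slots is what collapses the three-term expression for $\tau^\nabla_\gamma$ into an honest cyclic sum. Once this identification is in place, both directions of the bijection, as well as the implicit total skew-symmetry of $\tau^\nabla_\gamma$ noted before the statement, follow at once.
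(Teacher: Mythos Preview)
Your proof is correct and follows essentially the same route as the paper: both write $\nabla=\nabla^0+\Theta$, translate metricity into skew-symmetry of $\Xi$ in its last two slots, and reduce $\tau^\nabla_\gamma$ to the cyclic sum $\sum_{Cycl}\Xi(X,Y,Z)$. The only cosmetic difference is that the paper packages the computation via the intermediate identity $\tau^\nabla_\gamma(X,Y,Z)=\gamma(T^\nabla(X,Y),Z)+\Xi(Z,X,Y)$ (with $T^\nabla$ the ordinary torsion), whereas you reach the same three-term expression by explicitly evaluating $X\wedge_\nabla Y-X\wedge_{\nabla^0}Y$; the algebra is identical.
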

\begin{proof} Put $$\nabla_XY=\nabla^0_XY+\Theta(X,Y),$$ where the {\it covariant deformation tensor} $$\Xi(X,Y,Z)=\gamma(\Theta(X,Y),Z)$$ satisfies the property
\begin{equation}\label{Ximetric} \Xi(X,Y,Z)=-\Xi(X,Z,Y).\end{equation}
Then, we get
\begin{equation}\label{torsC2} \tau_\gamma^\nabla(X,Y,Z)=\gamma(T^\nabla(X,Y),Z)
+\Xi(Z,X,Y),\end{equation}
where $T^\nabla$ is the usual torsion of the connection. Since the Levi-Civita connection $\nabla^0$ has no torsion,
$$T^\nabla(X,Y)=\Theta(X,Y)-\Theta(Y,X)$$ and (\ref{Ximetric}) gives
\begin{equation}\label{torsC3} \tau^\nabla_\gamma(X,Y,Z)=\sum_{Cycl(X,Y,Z)}\Xi(X,Y,Z).\end{equation}
This result and (\ref{Ximetric}) justify the conclusion. \end{proof}

Notice that there exists a family of such tensor fields $\Xi$. Indeed, since alternation is an epimorphism $$alt:T^*M\otimes(\wedge^2T^*M)\rightarrow\wedge^3T^*M,$$ where the dimension of the first space is $m^2(m-1)/2$ and the dimension of the second is $m(m-1)(m-2)/6$, the dimension of the kernel is $m(m^2-1)/3$ ($m=dim\,M$).

If we come back to the definition of $\Xi$, and since the metric character of the connections ensures the first condition (\ref{condXi0}), we get
\begin{prop}\label{proptau0} The $\gamma$-metric connection $\nabla$ has a vanishing $\gamma$-torsion iff
\begin{equation}\label{tau0ciclic} \begin{array}{r}
\sum_{Cycl(X,Y,Z)}\gamma(\nabla_XY,Z)
=\sum_{Cycl(X,Y,Z)}\gamma(\nabla^0_XY,Z)\vspace*{2mm}\\ =\frac{1}{2}
\sum_{Cycl(X,Y,Z)}\{Z(\gamma(X,Y))+\gamma([X,Y],Z)\}.\end{array}
\end{equation}
\end{prop}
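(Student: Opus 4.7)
The plan is to split the biconditional into two equalities and handle each separately, leveraging the work already done in Proposition \ref{0gammators}.

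For the first equality, I will recycle the identity (\ref{torsC3}) from the proof of Proposition \ref{0gammators}. Writing any $\gamma$-metric connection as $\nabla = \nabla^0 + \Theta$ with covariant deformation tensor $\Xi(X,Y,Z) = \gamma(\Theta(X,Y),Z)$, one has $\tau^\nabla_\gamma(X,Y,Z) = \sum_{Cycl(X,Y,Z)} \Xi(X,Y,Z)$. The vanishing of the $\gamma$-torsion therefore amounts exactly to
\[
\sum_{Cycl(X,Y,Z)} \gamma(\nabla_X Y - \nabla^0_X Y, Z) = 0,
\]
which is the first claimed equality. This part requires no new computation; it is a direct rephrasing of Proposition \ref{0gammators}.

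For the second equality, I will apply the Koszul formula for the Levi-Civita connection of $\gamma$,
\[
2\gamma(\nabla^0_X Y,Z) = X(\gamma(Y,Z)) + Y(\gamma(Z,X)) - Z(\gamma(X,Y)) - \gamma(X,[Y,Z]) + \gamma(Y,[Z,X]) + \gamma(Z,[X,Y]),
\]
and then sum over cyclic permutations of $(X,Y,Z)$. Under the cyclic sum, the three derivative terms on the right collapse into $\sum_{Cycl} Z(\gamma(X,Y))$ (the two with a plus sign give it twice, the one with a minus sign subtracts it once), and similarly the three bracket terms collapse into $\sum_{Cycl} \gamma([X,Y],Z)$. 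Dividing by $2$ yields the second equality.

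Combining the two steps proves the proposition. The only potential obstacle is bookkeeping in the cyclic sum of the Koszul identity, and even that is routine since each of the six terms on the right-hand side of Koszul is obtained from another by a cyclic shift. No further structure of the para-Hermitian setting is used here; the statement is really a general identity for $\gamma$-metric connections on any (pseudo-)Riemannian manifold, expressed through the bracket $[\,,\,]_\gamma$ and the associated notion of $\gamma$-torsion.
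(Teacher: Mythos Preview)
Your proposal is correct and follows essentially the same approach as the paper's proof: the paper likewise identifies the first equality with the cyclic condition on $\Xi$ coming from Proposition \ref{0gammators} (i.e., the second condition in (\ref{condXi0})), and derives the second equality from the global (Koszul) expression of the Levi-Civita connection. You have simply spelled out the cyclic-sum bookkeeping that the paper leaves implicit.
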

\begin{proof} The first equality (\ref{tau0ciclic}) is just the second condition (\ref{condXi0}). The second equality (\ref{tau0ciclic}) follows from the well known global expression of the Levi-Civita connection (e.g., \cite{KN}, Chapter IV, \&2).\end{proof}

Now, we return to the (pseudo-)Riemannian field $g$ on the para-Hermitian manifold $(M,\gamma,F)$ and to the double metric connections $\nabla$. For such a connection, the restriction of the $\gamma$-torsion to $V_+\times V_-$ will be called the {\it mixed $\gamma$-torsion} and we get the following result.
\begin{prop}\label{cutting} If $\nabla'$ is a double metric connection with a vanishing mixed $\gamma$-torsion, there exists a unique deformation tensor field $\Phi$ with a totally skew symmetric, corresponding, covariant deformation such that the new connection $\nabla=\nabla'+\Phi$ is double metric and has a vanishing $\gamma$-torsion.
\end{prop}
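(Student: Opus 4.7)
The plan is to write $\nabla = \nabla' + \Phi$ and work with the covariant deformation $\Xi(X,Y,Z) = \gamma(\Phi(X,Y),Z)$. Because $\nabla'$ is $\gamma$-metric, $\nabla$ is $\gamma$-metric iff $\Xi$ is skew in its last two arguments; combined with the total skew-symmetry demanded in the statement, $\Xi$ will then be a $3$-form. A direct computation from (\ref{torsC}) and the definition (\ref{wedge0}) of $\wedge_\nabla$ yields
$$\tau^\nabla_\gamma(X,Y,Z) - \tau^{\nabla'}_\gamma(X,Y,Z) = \Xi(X,Y,Z) - \Xi(Y,X,Z) - \Xi(Z,Y,X),$$
and total skew-symmetry of $\Xi$ collapses the right hand side to $3\,\Xi(X,Y,Z)$. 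The requirement $\tau^\nabla_\gamma = 0$ thus forces $\Xi = -\frac{1}{3}\,\tau^{\nabla'}_\gamma$, giving uniqueness immediately; only existence remains, i.e. verifying that this $\Xi$ really comes from a deformation $\Phi$ compatible with the full double metric condition.

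For $\nabla = \nabla'+\Phi$ to stay double metric, $\Phi$ must commute with the almost product structure $H = \sharp_g\circ\flat_\gamma$ (which is parallel under any such connection), so $\Phi(X,\cdot)$ must preserve the eigenbundles $V_\pm$. I will use this, together with the $\gamma$-orthogonality $V_+\perp_\gamma V_-$ from (\ref{ggamma}) and total skew-symmetry of $\Xi$, to check that $\Xi$ must vanish on any triple whose arguments do not all lie in the same $V_\pm$: components of type $(X,Y_+,Z_-)$ are zero directly since $\Phi(X,Y_+)\in V_+$, and the remaining dangerous components of type $(Y_+,X_-,Z_-)$ are eliminated by swapping the first two arguments via total skew-symmetry (turning them into the previous type) and invoking non-degeneracy of $\gamma|_{V_-}$. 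The admissible $\Xi$ are therefore exactly the $3$-forms supported on $V_+^{\otimes 3}\sqcup V_-^{\otimes 3}$.

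It remains to confirm that the algebraic candidate $-\frac{1}{3}\,\tau^{\nabla'}_\gamma$ has this same support, and this is the step where the mixed $\gamma$-torsion hypothesis enters decisively. By the cyclic-sum identity (\ref{torsC3}) in the proof of Proposition \ref{0gammators}, the covariant $\gamma$-torsion of any $\gamma$-metric connection is automatically totally skew. The hypothesis says $\tau^{\nabla'}_\gamma$ vanishes on $V_+\otimes V_-\otimes TM$; total skew-symmetry then propagates this vanishing to every triple with at least one argument in $V_+$ and at least one in $V_-$, so $\tau^{\nabla'}_\gamma$ is indeed supported on $V_+^{\otimes 3}\sqcup V_-^{\otimes 3}$. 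Setting $\Phi(X,Y)=\sharp_\gamma\!\left(-\frac{1}{3}\,\tau^{\nabla'}_\gamma(X,Y,\cdot)\right)$ then produces the required double metric connection. The main technical point of the argument is precisely this matching of support patterns: the role of the mixed-torsion assumption is to guarantee that the unique algebraic solution imposed by total skew-symmetry is simultaneously compatible with the double metric constraint.
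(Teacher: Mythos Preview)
Your proof is correct and follows essentially the same route as the paper's: both arguments compute $\tau^\nabla_\gamma-\tau^{\nabla'}_\gamma$ as a cyclic sum of the covariant deformation (your displayed identity is just formula (\ref{tauPsi}) rewritten using the skew-symmetry in the last two slots), deduce $\Psi=-\tfrac{1}{3}\tau^{\nabla'}_\gamma$ for uniqueness, translate the $g$-preservation of $\nabla$ into the vanishing condition $\Psi(X,S,U)=0$ for $S\in V_+$, $U\in V_-$, and then use total skew-symmetry of $\tau^{\nabla'}_\gamma$ to see that the mixed-torsion hypothesis is exactly what makes the candidate $\Psi$ satisfy that vanishing. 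Your phrasing in terms of ``support on $V_+^{\otimes 3}\sqcup V_-^{\otimes 3}$'' is a bit more explicit than the paper's, but the logical content is identical.
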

\begin{proof} Denote $\Psi(X,Y,Z)=\gamma(\Phi(X,Y),Z)$. Then, because the connections $\nabla,\nabla'$ preserve the metric $\gamma$, (\ref{Ximetric}) and (\ref{torsC2}) hold for $\Psi$ instead of $\Xi$ and (\ref{torsC3}) becomes
\begin{equation}\label{tauPsi}
\tau_\gamma^\nabla(X,Y,Z)=\tau_\gamma^{\nabla'}(X,Y,Z)+
\sum_{Cycl(X,Y,Z)}\Psi(X,Y,Z).\end{equation}
Furthermore, if $\nabla'$ preserves $g$ too, the condition for $\nabla$ to preserve $g$ is $\Phi(X,HY)=H\Phi(X,Y)$, which is equivalent to
$$ \Psi(X,HY,HZ)=\Psi(X,Y,Z),$$
hence, with
\begin{equation}\label{PsiH2} \Psi(X,S,U)=0,\;\;\forall S\in V_+,U\in V_-.\end{equation} Equality (\ref{tauPsi}) tells us that the required conclusion holds iff the deformation is given by
$$\Psi(X,Y,Z)=-(1/3)\tau_\gamma^{\nabla'}(X,Y,Z).$$ Since we require (\ref{PsiH2}), the indicated choice of $\Psi$ is valid iff
$$\tau_\gamma^{\nabla'}(X,S,U)=0,\;\;\forall S\in V_+,U\in V_-.$$
In view of the total skew symmetry of $\tau_\gamma^{\nabla'}$ this is equivalent with the vanishing of the mixed $\gamma$-torsion of $\nabla'$.
\end{proof}

We will say that $\nabla$ is obtained from $\nabla'$ by removing the $\gamma$-torsion.

The vanishing of the mixed $\gamma$-torsion is equivalent with the pair of conditions
$$ \begin{array}{c}
\gamma(T^{\nabla'}_\gamma(S,U),Z_{V_+})=0,\;
\gamma(T^{\nabla'}_\gamma(S,U),Z_{V_-})=0,\vspace*{2mm}\\ \forall S,Z_{V_+}\in V_+,\,
U,Z_{V_-}\in V_-.\end{array}$$ Then, if we insert the expression of $T^{\nabla'}_\gamma$ and use the preservation of the two $\gamma$-orthogonal subbundles $V_\pm$, we will see that the vanishing of the mixed torsion is equivalent to
\begin{equation}\label{mixt02}
\nabla'_{U}S=pr_{V_+}[U,S]_\gamma,\; \nabla'_{S}U
=pr_{V_-}[S,U]_\gamma.\end{equation}
Thus, these covariant derivatives are the same for all double metric connections with a vanishing mixed $\gamma$-torsion.
Putting $U=\iota_-X,S=\iota_+Y$ where $X,Y\in L$ and using (\ref{defD}), we see that conditions (\ref{mixt02}) are equivalent with
\begin{equation}\label{mixt03} \begin{array}{l} D^+_{\iota_-X}Y=pr_Lpr_{V^+}[\iota_-X,\iota_+Y]_\gamma,
\vspace*{2mm}\\ D^-_{\iota_+Y}X=pr_Lpr_{V^-}[\iota_+Y,\iota_-X]_\gamma.
\end{array}\end{equation}

Using (\ref{lincroset}) we see that the expressions (\ref{mixt03})are compatible with the definition of a connection on $L$.
Therefore, if we add operators $D^-_{\iota_-X}Y,D^+_{\iota_+X}Y$ to (\ref{mixt03}) to define connections on $L$, we may, afterwards, continue as earlier and get a double metric connection with a vanishing mixed $\gamma$-torsion.

In particular, we get a canonical connection if we define
$$
D^-_{\iota_-X}Y=\hspace{1pt}^\beta\hspace{-2pt}
D^+_{\iota_-X}Y,\:D^+_{\iota_+X}Y=
\hspace{1pt}^\beta\hspace{-2pt}D^+_{\iota_+X}Y,$$
where $\hspace{1pt}^\beta\hspace{-2pt}D^\pm$ are the $k$-metric connections on $L$ that were used in the definition of the initial connection of type 2. (The formulas would simplify by using in both cases the connection $D'$ used for the initial connection of type 1, but, our choice has the advantage of introducing the form $\beta$ into the connection.)	Then, the connections $D^\pm$ are given by
\begin{equation}\label{LC+}
D^+_ZY=\hspace{1pt}^\beta\hspace{-2pt}D^\pm_{pr_{V_\pm}Z}Y+
pr_Lpr_{V_\pm}[pr_{V_\mp}Z,\iota_\pm Y]_\gamma,\;Y\in L,Z\in TM\end{equation}
and they preserve the metric $k$. Indeed, $\forall X,Y\in L,Z\in TM$, using the fact that $\hspace{1pt}^\beta\hspace{-2pt}D^\pm$ preserve $k$, the metric property of the $\gamma$-bracket and the $\gamma$-orthogonality of $V_\pm$, we have:
$$\begin{array}{l} k(D^\pm_ZX,Y)+k(X,D^\pm_ZY)=k(\hspace{1pt}^\beta\hspace{-2pt}D^\pm_ZX,Y)+
k(X,\hspace{1pt}^\beta\hspace{-2pt}D^\pm_ZY)
\vspace*{2mm}\\ +\gamma(pr_{V_\pm}[pr_{V_\mp}Z,\iota_\pm X]_\gamma,\iota_\pm Y)+\gamma(X,pr_{V_\pm}[pr_{V_\mp}Z,\iota_\pm Y]_\gamma)
\vspace*{2mm}\\ =(pr_{V_\pm}Z)(k(X,Y))+(pr_{V_\mp}Z)(k(X,Y))
=Z(k(X,Y)).\end{array}$$

The double metric connection that corresponds to the pair of connections given by (\ref{LC+}) will be denoted by $\hspace{1pt}^\beta\hspace{-1pt}\nabla^\gamma$; it is a connection of type 2.
\begin{defin}\label{conexc} {\rm The connection obtained by removing the $\gamma$-torsion of $\hspace{1pt}^\beta\hspace{-1pt}\nabla^\gamma$ will be denoted by $\nabla^c$ and it will be called the {\it canonical double metric connection} of the field.}\end{defin}

For any double metric connection $\nabla$, we can define an analog of scalar curvature in the following way. We start with the usual curvature tensor $R^\nabla$ of the canonical connection and modify it into a tensor that is compatible with the decomposition $TM=V_+\oplus V_-$. Namely, we define the {\it modified canonical curvature operator} by
\begin{equation}\label{modifcurb} \tilde{R}^\nabla(X,Y)=R^\nabla(pr_{V_+}X,pr_{V_+}Y)
+R^\nabla(pr_{V_-}X,pr_{V_-}Y),\;\;X,Y\in TM.\end{equation}
Accordingly, we have $\tilde{R}^\nabla(X_+,Y_-)=0$, $\forall X_+\in V_+,Y_-\in V_-$. Furthermore, formula (\ref{defD}) and the definition of curvature yield
\begin{equation}\label{curbpm} \tilde{R}^\nabla(X,Y)(\iota_\pm U)=R^\nabla(X,Y)(\iota_\pm U) =\iota_\pm(R^{D^\pm}(X,Y)U),\end{equation} where $X,Y\in V_\pm,\,U\in L$
(in (\ref{curbpm}), the second equality holds $\forall X,Y\in TM$).

Now, assume that $\gamma|_{V_+}$ has the $\pm$-inertia indices $(p,q)$, $p+q=n$, which implies that $\gamma|_{V_-}$ has indices $(q,p)$, hence, by (\ref{ggamma}), $g|_{V_\pm}$ has the same indices $(p,q)$ for both $V_+$ and $V_-$. A local basis $(e_i,e_u,f_i,f_u)$ ($i=1,...,p,\,u=p+1,...,n$) of $TM$ is a {\it double (pseudo-)orthonormal basis} if  $e_i,e_u\in V_+,f_i,f_u\in V_-$ and
$$\begin{array}{l}
g(e_i,e_j)=\gamma(e_i,e_j)=\delta_{ij},g(e_i,e_u)=\gamma(e_i,e_u)=0,  \vspace*{2mm}\\ g(e_u,e_v)=\gamma(e_u,e_v)=-\delta_{uv},
\vspace*{2mm}\\ g(f_i,f_j)=-\gamma(f_i,f_j)=\delta_{ij},g(f_i,f_u)=-\gamma(f_i,f_u)=0,  \vspace*{2mm}\\ g(f_u,f_v)=-\gamma(f_u,f_v)=-\delta_{uv}.
\end{array}$$

Then, we shall define the {\it modified Ricci curvature} of $\nabla$ by
$$ \begin{array}{l}
\tilde{r}^\nabla(X,Y)=\frac{1}{2}\{\sum_{i=1}^p[g(e_i,\tilde{R}^\nabla(X,e_i)Y)+
g(e_i,\tilde{R}^\nabla(Y,e_i)X)]\vspace*{2mm}\\ - \sum_{u=p+1}^n[g(e_u,\tilde{R}^\nabla(X,e_u)Y)+
g(e_u,\tilde{R}^\nabla(Y,e_u)X)]\vspace*{2mm}\\ + \sum_{i=1}^p[g(f_i,\tilde{R}^\nabla(X,f_i)Y)+
g(f_i,\tilde{R}^\nabla(Y,f_i)X)]\vspace*{2mm}\\ - \sum_{u=p+1}^n[g(f_u,\tilde{R}^\nabla(X,f_u)Y)+
g(f_i,\tilde{R}^\nabla(Y,f_i)X)]\}.\end{array}$$
The result is invariant under changes of double (pseudo-)orthonormal bases since such changes belong to $O(p,q)\times O(p,q)$. The modified Ricci curvature is symmetric and it vanishes if evaluated on arguments in different subbundles $V_\pm$, hence it is a tensor that is compatible with the decomposition $TM=V_+\oplus V_-$.

Finally, we will define the {\it modified scalar curvature} by
\begin{equation}\label{scalarg} \tilde{\sigma}^\nabla=\sum_{i=1}^p\tilde{r}^\nabla(e_i,e_i)
-\sum_{u=p+1}^n\tilde{r}^\nabla(e_u,e_u) +\sum_{i=1}^p\tilde{r}^\nabla(f_i,f_i)
-\sum_{u=p+1}^n\tilde{r}^\nabla(f_u,f_u),\end{equation}
which is invariant under base changes for the same reason as $\tilde{r}^\nabla$.

The modified Ricci and scalar curvatures of the canonical double metric connection $\nabla^c$ are invariants of the field $g$. Therefore, we may define the action of the field as follows.
\begin{defin}\label{defact} {\rm The integral
\begin{equation}\label{action} \mathcal{A}(g)=\int_Me^{-2\phi}\tilde{\sigma}(g)
\sqrt{|det(g)|}dx^1\wedge...\wedge dx^{2m},
\end{equation} where $(x^1,...,x^{2m})$ are positively oriented, local coordinates on $M$, $\phi\in C^\infty(M)$ is the {\it dilation scalar} of the field \cite{HHZ} and $\tilde{\sigma}(g)=\tilde{\sigma}^{\nabla^c}$,
is the {\it action} of the field $g$ of $L$-components $(k,\beta)$.}
\end{defin}

Notice that a para-Hermitian manifold $M^{2m}$ is oriented since it has the nowhere vanishing form $\omega^m$. But, conditions ensuring that the integral (\ref{action}) is finite have to be required.
\section{Reduction}
In this section we discuss reduction of a field under the action of a symmetry group. We assume that the reader is familiar with reduction of symplectic manifolds e.g., \cite{{LM},{OR}}. For K\"ahler reduction we refer to \cite{HKLR} and for para-K\"ahler reduction to \cite{K}. We will review the reduction process in a form that is suitable for the reduction of a field and we will define reduction of a para-Hermitian manifold.

Before refereeing to symmetry groups, we consider {\it geometric reduction}, i.e., reduction to the quotient space of a manifold by a convenient foliation. We shall assume that the space of leaves of the latter is a manifold since we want the result of reduction to be in the $C^\infty$-category.
\begin{prop}\label{geomredth} Let $(N,\omega)$ be an almost presymplectic manifold, where $dim\,N=n$ and $rank\,\omega=2r\leq n$. Assume that {\rm1)} $K=ann\,\omega$ is tangent to a foliation $\mathcal{K}$, {\rm2)} locally, $K$ is spanned by infinitesimal automorphisms of $\omega$, {\rm3)} there exists a fibration $p:N\rightarrow Q$, where $Q$ is a manifold of dimension $2r$ and $p$ is constant on the leaves of $\mathcal{K}$. Then, $\omega=p^*\varpi$ where $\varpi$ is a well defined, non degenerate $2$-form on $Q$. Furthermore, if $g$ is a (pseudo-) Riemannian metric on $N$ with a non degenerate restriction $g_K$ and such that the restriction $g_{K'}$  ($K'=K^{\perp_{g}}$) is of the form $g_{K'}=p^*\lambda$, where $\lambda$ is a (pseudo-)Riemannian metric on $Q$, then, the Levi-Civita connection $\nabla^g$ of $g$ projects to the Levi-Civita connection $\nabla^\lambda$ of $\lambda$ on $Q$.
\end{prop}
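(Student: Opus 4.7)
My plan is to handle the two assertions separately, using the basic-form criterion for the first and the Koszul formula for the second.

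\textbf{Descent of $\omega$.} Hypothesis 1) gives $i(X)\omega = 0$ for every $X \in \Gamma K$, and hypothesis 2) provides a local frame of $K$ by infinitesimal automorphisms, i.e., vector fields $X$ with $\mathfrak{L}_X\omega = 0$. Hence $\omega$ is both horizontal and invariant along the foliation $\mathcal{K}$, that is, $\omega$ is $\mathcal{K}$-basic. Since $p:N\to Q$ realizes $Q$ as the leaf space of $\mathcal{K}$, the basic-form theorem produces a unique $2$-form $\varpi$ on $Q$ with $\omega = p^*\varpi$. Because $p_*$ is a fiberwise surjection with $\ker p_* = K = \mathrm{ann}\,\omega$, the formula $\varpi_{p(x)}(p_*u,p_*v)=\omega_x(u,v)$ identifies $\varpi$ pointwise with $\omega$ modulo $K$; the rank-$2r = \dim Q$ condition on $\omega$ then makes $\varpi$ non-degenerate.

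\textbf{Projection of $\nabla^g$.} Since $g_K$ is non-degenerate we obtain the orthogonal decomposition $TN = K\oplus K'$, and $p_*$ restricted to $K'$ is a fiberwise isomorphism onto $TQ$, so every $X\in\mathfrak{X}(Q)$ admits a unique horizontal lift $\bar X\in\Gamma K'$. I plug three such lifts $\bar X,\bar Y,\bar Z$ into the Koszul identity for $\nabla^g$. The hypothesis $g_{K'}=p^*\lambda$ gives $g(\bar X,\bar Y)=\lambda(X,Y)\circ p$, so the three ``differentiate-the-inner-product'' terms pull back from $Q$ to the corresponding terms of the Koszul identity for $\nabla^\lambda$. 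For the bracket terms I use that $p$-relatedness of horizontal lifts with their projections forces $p_*[\bar X,\bar Y]=[X,Y]$; the vertical part of $[\bar X,\bar Y]$ lies in $K$, so by the $g$-orthogonality $K\perp_g K'$ it drops out when paired with $\bar Z$, leaving $g([\bar X,\bar Y],\bar Z)=\lambda([X,Y],Z)\circ p$, and similarly for the other two bracket terms.

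Assembling the six terms yields
\begin{equation*}
2g(\nabla^g_{\bar X}\bar Y,\bar Z) \;=\; 2\lambda(\nabla^\lambda_X Y,Z)\circ p \;=\; 2g\bigl(\overline{\nabla^\lambda_X Y},\bar Z\bigr)
\end{equation*}
for every $\bar Z\in\Gamma K'$. Non-degeneracy of $g|_{K'}$ then forces the $K'$-component of $\nabla^g_{\bar X}\bar Y$ to coincide with the horizontal lift of $\nabla^\lambda_X Y$, so $p_*\nabla^g_{\bar X}\bar Y = \nabla^\lambda_X Y$: this is exactly the statement that $\nabla^g$ projects to $\nabla^\lambda$.

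The delicate step is the bracket manipulation: the vertical part of $[\bar X,\bar Y]$ does \emph{not} vanish in general, but it is harmless because $K\perp_g K'$; this is precisely where the non-degeneracy of $g_K$, which secures the direct-sum decomposition $TN=K\oplus K'$, enters essentially.
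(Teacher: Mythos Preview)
Your proof is correct and follows essentially the same route as the paper: both arguments verify that $\omega$ is $\mathcal{K}$-basic via the conditions $i(X)\omega=0$ and $\mathfrak{L}_X\omega=0$ (the latter obtained from hypothesis 2) on a local spanning set), and both reduce the connection statement to the Koszul formula applied to projectable (foliated) sections of $K'$. The paper phrases the second part as ``$g(\nabla^g_XY,V)$ is a foliated function for foliated $X,Y,V\in\Gamma K'$'', which is exactly what your six-term computation establishes; your explicit remark that the vertical piece of $[\bar X,\bar Y]$ is killed by $g$-orthogonality with $K'$ is the point the paper leaves implicit in its appeal to the global Levi-Civita expression.
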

\begin{proof}
As in foliation theory \cite{Mol}, objects on $N$ that either project to or are a lift of an object on $Q$ will be called {\it projectable} or {\it foliated}. The first required conclusion includes the fact that $\omega$ is a foliated form, which is equivalent to the conditions
$$i(X)\omega=0,\;\mathfrak{L}_X\omega=0,\;\; \forall X\in\Gamma K.$$ The first condition holds obviously. The second condition is implied by hypothesis 2) that gives a local expression $X=\sum f_iX_i$, where the sum is finite, $X_i\in\Gamma K$ and $ \mathfrak{L}_{X_i}\omega=0$. Thus, there exists a $2$-form $\varpi\in\Omega^2(Q)$ such that $\omega=p^*\varpi$. The non degeneracy of $\varpi$ follows from the definition of $K$.

It follows easily that $K$ is a foliation iff
$$i(X\wedge Y)d\omega=0,\;\;\forall X,Y\in K,$$
and the presymplectic condition $d\omega=0$ implies conditions 1) and 2). If $p$ has connected fibers, $Q$	 is the space of leaves $N/\mathcal{K}$.

In the second part of the proposition, the hypotheses on $g$ mean that $g$ is a foliated (pseudo-)Riemannian metric. The conclusion means that,
for every two foliated vector fields $X,Y\in K'$, $pr_{K'}\nabla^{g}_XY$ is  a foliated vector field that projects to $\nabla^\lambda_{[X]}[Y]$, where the brackets denote the projected vector fields on $Q$. The projectability of $pr_{K'}\nabla^{g}_XY$ is equivalent to the fact that
$$g(pr_{K'}\nabla^{g}_XY,V)=g(\nabla^{g}_XY,V)$$ is a foliated function for every foliated vector field $V\in\Gamma K'$. This, indeed, follows directly from the well known global expression of the Levi-Civita connection (\cite{KN}, Chapter IV, \&2). The same expression also shows that the projection to $Q$ is $\nabla^\lambda$.
\end{proof}
\begin{defin}\label{defred} {\rm Under the conditions of Proposition \ref{geomredth}, the manifold $(Q,\varpi)$ is the {\it reduction} of $(M,\omega)$ and, if the metric $g$ exists, $\lambda$ is the {\it reduction} of the metric $g$.}\end{defin}

The main applications of reduction are for submanifolds $\iota:N^n\hookrightarrow M$ of an almost symplectic manifold $(M^{2m},\omega)$ that have the constant rank $rank\,(\iota^*\omega)=2r$. Then, $(N,\omega_N=\iota^*\omega)$ is presymplectic with annihilator
$$K=ann\,\omega_N=TN\cap T^{\perp_\omega}N$$
and if the requirements for reduction are satisfied we get a reduced manifold $(Q,\varpi)$, which is said to be the {\it reduction of $(M,\omega)$ via $N$}.
Furthermore, if $M$ is also endowed with a (pseudo-)Riemannian metric $g$ such that the pullbacks $g_N,g_K$ are non-degenerate and the pullback $g_{K'}$ ($TN=K\oplus K'$, $K'=K^{\perp_{g_N}}$) is $\mathcal{K}$-projectable, then, $Q$ has a reduced (pseudo-)Riemannian metric $\lambda$, which is the projection of $g_{K'}$ and the corresponding Levi-Civita connections are related by
$$
\nabla^\lambda_{[X]}[Y]=[pr_{K'}\nabla^{g_N}_XY]=[pr_{K'}\nabla^{g}_XY],
$$ where $X,Y\in\Gamma TN$ are $\mathcal{K}$-projectable and the brackets denote projection to $Q$.
\begin{rem}\label{obscoiso} {\rm A coisotropic submanifold N satisfies the condition $T^{\perp_\omega}N\subseteq TN$, hence, it necessarily has a constant rank and may admit reduction.}\end{rem}

Proposition \ref{geomredth} may be used to derive the following result.
\begin{prop}\label{redKpK} Assume that the almost symplectic manifold $(M^{2n},\omega)$ is endowed with a (pseudo-)Riemannian metric $\gamma$ that satisfies one of the following two conditions
\begin{equation}\label{condKpK} \sharp_\omega\circ\flat_\gamma
=\mp\sharp_\gamma\circ\flat_\omega.\end{equation}
Then, for the minus sign, $J=\sharp_\omega\circ\flat_\gamma$ is an almost complex structure and $(M,J,\gamma)$ is a (pseudo-)Hermitian manifold with the fundamental form $\omega$ and for the plus sign, $F=\sharp_\omega\circ\flat_\gamma$ is an almost para-complex structure and $(M,F,\gamma)$ is a para-Hermitian manifold with the fundamental form $\omega$.

Let $\iota:N\rightarrow M$ be a submanifold of constant rank.
Assume that the subbundle $K=TN\cap T^{\perp_\omega}N\subseteq TN$ satisfies hypotheses {\rm1), 2), 3)} of Proposition \ref{geomredth} along the almost presymplectic manifold $(N,\iota^*\omega)$. Assume also that the metric $\gamma$ has non degenerate restrictions $g_N,g_K$ and a restriction $g_{K'}=p^*\lambda$ where $K'=TN\cap K^{\perp_\gamma}$ and $\lambda$ is a (pseudo-)Riemannian metric on $Q$. Finally, assume that
\begin{equation}\label{condK'}
K^{'\perp_\gamma}=K^{'\perp_\omega}.\end{equation}

Then, the corresponding reduced manifold $(Q,\varpi,\lambda)$ also is an almost (pseudo) Hermitian, respectively, an almost para-Hermitian manifold. Moreover, if the manifold $M$ is K\"ahler, respectively, para-K\"ahler, the same property holds for the reduced manifold $Q$.\end{prop}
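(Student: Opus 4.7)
The plan is to define the reduced (almost) (para-)Hermitian structure on $Q$ intrinsically, as $\tilde F = \sharp_\varpi \circ \flat_\lambda$ (or $\tilde J$ in the Hermitian case), and to verify all the defining identities by pushing forward the corresponding identities for $F$ on $M$ through the submersion $p$. The hypothesis (\ref{condK'}) is the key; it must be exploited twice to yield two invariance properties of $F$ that guarantee compatibility with the orthogonal decomposition of $TM|_N$.

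I describe the para-Hermitian case; the Hermitian case is entirely parallel with $F^2=Id$ replaced by $J^2=-Id$. From $F=\sharp_\omega\flat_\gamma$ one has the key identity $\omega(FX,Y)=\gamma(X,Y)$ for all $X,Y\in TM$. Applied with $X\in K'$ and variable $Y$, this gives $(FK')^{\perp_\omega}=K^{'\perp_\gamma}$, which equals $K^{'\perp_\omega}$ by (\ref{condK'}); non-degeneracy of $\omega$ on $M$ lets us take $\omega$-perps to conclude $F(K')=K'$. The dual computation, applied with $Z\in K^{'\perp_\gamma}$ and $Y\in K'$ (giving $\omega(FZ,Y)=\gamma(Z,Y)=0$, hence $FZ\in K^{'\perp_\omega}=K^{'\perp_\gamma}$), yields $F(K^{'\perp_\gamma})=K^{'\perp_\gamma}$. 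Since $\gamma|_N$ and $\gamma|_K$ are non-degenerate so is $\gamma|_{K'}$, giving the $\gamma$-orthogonal splitting $TM|_N=K'\oplus K^{'\perp_\gamma}$ with respect to which $F$ commutes with the orthogonal projection $pr_{K'}$.

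Now $\omega|_{K'}$ is non-degenerate because $K'\cap K=0$ and $K=\ker(\omega|_{TN})$, and both $\omega|_{K'}$ and $\gamma|_{K'}$ are $p$-pullbacks of $\varpi$ and $\lambda$ respectively. Setting $\tilde F=\sharp_\varpi\circ\flat_\lambda$ on $Q$, the identity $\omega(FX,Y)=\gamma(X,Y)$ restricted to $X,Y\in\Gamma K'$ (both sides remaining in $K'$ since $F(K')=K'$) pushes forward to $\varpi(\tilde F\tilde X,\tilde Y)=\lambda(\tilde X,\tilde Y)$, showing that $\tilde F$ coincides with the $p$-projection of $F|_{K'}$. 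The identities $F^2=Id$ and $\gamma(FX,Y)=-\gamma(X,FY)$ restrict to $K'$ and descend to $\tilde F^2=Id$ and $\lambda(\tilde F\tilde X,\tilde Y)=-\lambda(\tilde X,\tilde F\tilde Y)$, exhibiting the almost para-Hermitian structure on $(Q,\varpi,\lambda)$ with fundamental form $\varpi$.

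For the K\"ahler (resp. para-K\"ahler) case, $d\varpi=0$ follows at once from $p^*d\varpi=\iota^*d\omega=0$ and the submersion property of $p$. The remaining integrability is obtained via Levi-Civita parallelism: both K\"ahler and para-K\"ahler are equivalent to $\nabla^\gamma J=0$ (respectively $\nabla^\gamma F=0$). For foliated $X,Y\in\Gamma K'$, using $F\circ pr_{K'}=pr_{K'}\circ F$ from paragraph 2, one obtains
\begin{equation*}
pr_{K'}\nabla^\gamma_X(FY)=pr_{K'}F\nabla^\gamma_X Y=F\,pr_{K'}\nabla^\gamma_X Y,
\end{equation*}
and by Proposition \ref{geomredth} the left side projects to $\nabla^\lambda_{\tilde X}(\tilde F\tilde Y)$ and the right to $\tilde F\nabla^\lambda_{\tilde X}\tilde Y$, so $\nabla^\lambda\tilde F=0$ on $Q$. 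The main obstacle is the twofold use of (\ref{condK'}) to secure both $F(K')=K'$ and $F(K^{'\perp_\gamma})=K^{'\perp_\gamma}$; once these invariances are in hand the rest of the proof is a routine pushforward via Proposition \ref{geomredth}.
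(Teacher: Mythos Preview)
Your proof is correct and follows the same overall strategy as the paper: use (\ref{condK'}) to establish $F$-invariance of $K'$, define the reduced structure as $\sharp_\varpi\circ\flat_\lambda$, and invoke the Levi-Civita projection of Proposition~\ref{geomredth}. Two remarks. First, your ``twofold'' use of (\ref{condK'}) is actually redundant: once $F(K')=K'$ is known, $F(K'^{\perp_\gamma})=K'^{\perp_\gamma}$ follows immediately from $\gamma$-skewness, since $\gamma(FZ,Y)=-\gamma(Z,FY)=0$ for $Z\in K'^{\perp_\gamma}$ and $Y\in K'$. Second, in the K\"ahler/para-K\"ahler step the paper takes a slightly different path: rather than checking $\nabla^\lambda\tilde F=0$, it verifies $\nabla^\lambda\varpi=0$ by computing $(\nabla^{\iota^*\gamma}_Z\omega)(X,Y)$ for foliated $X,Y,Z\in K'$ and using the normality of the second fundamental form of $N\subset(M,\gamma)$ to identify this with $(\nabla^\gamma_Z\omega)(X,Y)=0$. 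Your route via the commutation $F\circ pr_{K'}=pr_{K'}\circ F$ is equivalent and arguably more transparent, though it relies on the same identity $[pr_{K'}\nabla^{\gamma}_XY]=\nabla^\lambda_{[X]}[Y]$ (stated just before Remark~\ref{obscoiso}), which itself hides the second-fundamental-form argument.
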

\begin{proof} The conditions imposed on $\gamma$ show that $\iota^*\gamma$ satisfies the hypotheses required for the metric $g$ of Proposition \ref{geomredth} and $\lambda$ is the reduction of the metric $\gamma$ to $Q$. Moreover, condition (\ref{condK'}) is equivalent with the fact that $K'$ is invariant by $J$, respectively, $F$. Now, condition (\ref{condKpK}) is equivalent with $$\flat_\gamma
=\mp\flat_\omega\circ\sharp_\gamma\circ\flat_\omega.$$ If this condition is applied to vectors $X,Y\in K'$, then, the definitions of $\varpi,\lambda$ together with (\ref{condK'}) imply
$$\flat_\lambda
=\mp\flat_\varpi\circ\sharp_\lambda\circ\flat_\varpi.$$
The latter result and Proposition \ref{geomredth} justify the first conclusion of the present proposition.

For the second conclusion we have to check the condition $\nabla^\lambda\varpi=0$ \cite{AMT}. Equivalently, we have to check that
$\nabla^{\iota^*\gamma}_Z\omega(X,Y)=0$, for $K$-foliated vector fields $X,Y,Z\in K'$. This follows by the computation below, which holds for obvious reasons, including the fact that the second fundamental form of $N$ in $(M,\gamma)$ is normal to $N$:
$$\begin{array}{lcl}
\nabla^{\iota^*\gamma}_Z\omega(X,Y)&=&
Z(\omega(X,Y))-\omega(\nabla^{\iota^*\gamma}_ZX,Y)-
\omega(X,\nabla^{\iota^*\gamma}_ZY)\vspace*{2mm}\\
&=&Z(\omega(X,Y))-\gamma(pr_{K'}\nabla^{\iota^*\gamma}_ZX,JY)+
\gamma(JX,pr_{K'}\nabla^{\iota^*\gamma}_ZY)\vspace*{2mm}\\
&=&Z(\omega(X,Y))+\gamma(\nabla^{\iota^*\gamma}_ZX,JY)-
\gamma(JX,\nabla^{\iota^*\gamma}_ZY)\vspace*{2mm}\\
&=&Z(\omega(X,Y))+\gamma(\nabla^{\gamma}_ZX,JY)-
\gamma(JX,\nabla^{\gamma}_ZY)\vspace*{2mm}\\ &=&\nabla^\gamma_Z\omega(X,Y)\end{array}$$
In the para-Hermitian case, $J$ will be replaced by $F$.
The last covariant derivative vanishes for K\"ahler, respectively, para-K\"ahler manifolds.\end{proof}
\begin{rem}\label{obspropred} {\rm The reduced almost complex, respectively para-complex structure $\mathcal{J},\mathcal{F}$ defined on $Q$ by $\sharp_\varpi\circ\flat_\lambda$ is the projection by $p$ of the restriction of $J,F$ to $K'$. This implies that, if $J,F$ are integrable the same holds for $\mathcal{J},\mathcal{F}$. Another important fact is that condition (\ref{condK'}) necessarily holds if $N$ is a coisotropic submanifold of $(M,\omega)$. Indeed, then, $J(TN)$ also is a coisotropic subbundle of $TM$ with the $\omega$-orthogonal isotropic subbundle $JK$ and we get
$$K'=TN\cap K^{\perp_\gamma}=TN\cap (JK)^{\perp_\omega}=TN\cap(JTN),$$ which is a $J$-invariant subbundle of $TM$.}\end{rem}
\begin{defin}\label{Nreducing} {\rm A submanifold $N$ such that all the hypotheses of Proposition \ref{redKpK} hold will be called a {\it reducing submanifold}.}\end{defin}

Proposition \ref{redKpK}  leads to the following reduction theorem (notation is the same as in Proposition \ref{redKpK}).
\begin{prop}\label{redcamp} Let $(M,F,\gamma)$ be a para-Hermitian manifold and let $\iota:N\rightarrow M$ be a reducing submanifold. Let $g$ be a compatible metric that defines a field on $M$ and is such that: a) $g$ is non degenerate on $N$ and on $K$, b) $K^{\perp_g}\cap TN=K'$, c) $g|_{K^{\perp_{\iota^*g}}}=p^*\mu$ where $\mu$ is a (pseudo-)Riemannian metric on $Q$. Then, if $(Q,\varpi,\lambda)$ is the reduction of $M$ via $N$, the metric $\mu$ is compatible with the metric $\lambda$ and it defines the reduction of the field $g$.\end{prop}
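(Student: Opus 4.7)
The plan is to invoke Proposition~\ref{geomredth} a second time, now with the metric $g$ in place of $\gamma$, and then to descend the compatibility relation to $Q$. Hypotheses (a), (b), (c) are tailored for this: (a) gives the non-degeneracy of $\iota^{*}g$ and of $g|_K$; (b) identifies $K^{\perp_{\iota^{*}g}} = K'$, so that the $g$-orthogonal complement of $K$ in $TN$ is the \emph{same} subbundle $K'$ that carries $\lambda$; and (c) is the foliated condition $g|_{K'} = p^{*}\mu$. Hence $\mu$ is a well-defined (pseudo-)Riemannian metric on $Q$, and it is non-degenerate because $g|_{K'}$ is (which follows from $\iota^{*}g$ and $g|_K$ non-degenerate together with the orthogonal splitting $TN = K\oplus K'$). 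Non-degeneracy of $\mu|_{\mathcal{L}_-}$ on the reduced manifold follows from Definition~\ref{def1} for $g$ combined with Remark~\ref{obspropred}, which identifies $\mathcal{L}_-$ with the $p_{*}$-image of $K'\cap\bar L$.

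The substantive step is the compatibility $\sharp_\mu\circ\flat_\lambda = \sharp_\lambda\circ\flat_\mu$ on $Q$. By Proposition~\ref{propH}, this is equivalent to showing that $\mathcal{H}:=\sharp_\lambda\circ\flat_\mu$ is an almost product structure. I construct $\mathcal{H}$ by reducing the almost product structure $H = \sharp_g\circ\flat_\gamma$ from $M$. For $X\in K'$, condition (b) yields $\gamma(HX,Z)=g(X,Z)=0$ for every $Z\in K$, so $HX\in K^{\perp_\gamma}$; using $K^{\perp_\gamma} = K'\oplus TN^{\perp_\gamma}$ and the non-degeneracy of $\gamma|_{K'}$, let $\tilde H X\in K'$ denote its $K'$-component, i.e.\ the unique element of $K'$ with $\gamma(\tilde H X,Y)=g(X,Y)$ for all $Y\in K'$. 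Because $\tilde H$ is built algebraically from the foliated tensors $\gamma|_{K'}=p^{*}\lambda$ and $g|_{K'}=p^{*}\mu$, it descends to a $(1,1)$-tensor $\mathcal{H}$ on $Q$ characterized by $\lambda(\mathcal{H}[X],[Y])=\mu([X],[Y])$.

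The crux is then $\mathcal{H}^{2}=Id$. The cleanest route is to show that $H$ actually preserves $K'$ on $M$: then $\tilde H = H|_{K'}$ and $\mathcal{H}^{2}=Id$ is immediate from $H^{2}=Id$. Since $HX\in K'\oplus TN^{\perp_\gamma}$, the issue is whether the $TN^{\perp_\gamma}$-component vanishes, equivalently whether $TN^{\perp_g} = TN^{\perp_\gamma}$ inside $TM|_N$. This is the main obstacle of the proof: one must combine the $F$-invariance of $K'$ (condition (\ref{condK'}) for a reducing submanifold, recalled in Remark~\ref{obspropred}), the strong rigidity forced by $H^{2}=Id$, and the foliated character of $g|_{K'}$ from (c) in order to pin down that the two normal subbundles coincide. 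Once this is in hand, $H$ restricts to $K'$, the projected $\mathcal{H}$ satisfies $\mathcal{H}^{2}=Id$, and Proposition~\ref{propH} yields the compatibility of $\mu$ with $\lambda$; by construction the field $\mu$ is the reduction of the field $g$. All other parts of the argument (the two applications of Proposition~\ref{geomredth} and the foliated descent of $\tilde H$) are routine within the machinery already developed.
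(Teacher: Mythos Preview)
Your overall strategy coincides with the paper's: use Proposition~\ref{geomredth} once more (now for $g$) to obtain $\mu$, and then argue that the compatibility relation between $\gamma$ and $g$ descends to $K'$ and hence to $Q$. The paper's own proof is a single sentence---``Hypothesis b) implies that the compatibility between $\lambda$ and $\mu$ is equivalent with the restriction of the compatibility between $\gamma$ and $g$ to the subbundle $K'$''---so you are really trying to flesh out that sentence.

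The genuine gap in your proposal is precisely at the point you yourself flag as ``the main obstacle.'' You reduce the question to showing that $H$ preserves $K'$, equivalently that $TN^{\perp_g}=TN^{\perp_\gamma}$, and then write that ``one must combine the $F$-invariance of $K'$, the strong rigidity forced by $H^2=Id$, and the foliated character of $g|_{K'}$ in order to pin down that the two normal subbundles coincide.'' But you never carry out this combination, and in fact the ingredients you list do not obviously produce the conclusion: the $F$-invariance of $K'$ (condition~(\ref{condK'})) links $\gamma$ and $\omega$ on $K'$ but says nothing about $g$ off $K'$; the hypothesis (c) that $g|_{K'}=p^*\mu$ is purely a statement about $g$ restricted to $K'$ and carries no information about $TN^{\perp_g}$; and $H^2=Id$ is a global relation that does not by itself force $H$ to respect the splitting $TM|_N=K'\oplus K'^{\perp_\gamma}$. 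As it stands, your argument establishes only that $HX\in K'\oplus TN^{\perp_\gamma}$ for $X\in K'$ (this is exactly hypothesis b)), and stops there. Either an additional hypothesis of the type $TN^{\perp_g}=TN^{\perp_\gamma}$ is tacitly needed, or a different route to $\tilde H^2=Id$ on $K'$ must be found that does not pass through $H$-invariance of $K'$; you supply neither. The paper's terse proof likewise does not make this step explicit, so your identification of the difficulty is valuable, but the proposal as written is incomplete at its declared crux.
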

\begin{proof} The subbundles $K,K'$ coincide with those defined in Proposition \ref{geomredth}. The fact that $(Q,\varpi,\lambda)$ is a para-Hermitian manifold was proven in Proposition \ref{redKpK} and Remark \ref{obspropred}. The existence of $\mu$ follows from the second part of Proposition \ref{geomredth}. Hypothesis b) implies that the compatibility between $\lambda$ and $\mu$ is equivalent with the restriction of the compatibility between $\gamma$ and $g$ to the subbundle $K'$.\end{proof}

We will use the previous results in order to define reduction by a symmetry group. We begin by the following general considerations, which we, improperly but conveniently, call an example.
\begin{example}\label{excuG} {\rm
Let $(M,\omega)$ be an almost symplectic manifold endowed with an $\omega$-preserving action of a Lie group $G$. Let $\iota:N\hookrightarrow M$ be a submanifold, which is invariant by a subgroup $G'\subseteq G$ and satisfies the following conditions: i) $N$ is $\omega$-orthogonal to and has a clean intersection with the orbits $G(x)$, $x\in N$, ii) $\forall x\in N$, the $G'$-orbit of $x$ is $G'(x)=N\cap G(x)$ and it is a maximally isotropic submanifold of $(N,\iota^*\omega)$, iii) the action of $G'$ on $N$ is free and proper. Conditions i)-iii) imply that, $\forall x\in N$, $ann(\iota^*\omega_x)=T(G'(x))$ and that $N$ is a submanifold of constant rank.

Coming back to the previous general notation, now, $\mathcal{K}$ is the foliation of $N$ by the connected components of the orbits of $G'$ and obviously satisfies the hypotheses 1)-3) of Proposition \ref{geomredth}. Therefore, there exists a reduced manifold $(Q,\varpi)$ of $(M,\omega)$ via $N$.

Then, let $\gamma$ be a $G$-invariant (pseudo-)Riemannian metric on $M$ such that the pullbacks $\gamma_N,\gamma_K$ are non degenerate. The $G$-invariance of $\gamma$ implies $\mathfrak{L}_{\xi_M}\gamma=0$ for the infinitesimal transformations $\xi_M$  defined by $\xi\in \mathfrak{g}$, where $\mathfrak{g}$ is the Lie algebra of $G$. Accordingly, since any vector field $Z\in\Gamma TN$ that is tangent to the orbits of $G'$ is locally spanned by infinitesimal transformations of $G'\subseteq G$, if $X,Y\in K'=K^{\perp_{\iota^*\gamma}}$ are foliated vector fields, we get
$$\mathfrak{L}_Z\gamma(X,Y)=Z(g(X,Y))=0,$$ hence, $\gamma_{K'}$ is projectable and $\gamma$ reduces to a metric $\lambda$ on $Q$.

If $\gamma$ satisfies the hypotheses of Proposition \ref{redKpK} and of Proposition \ref{redcamp} we obtain a reduction for para-Hermitian manifolds and fields via a group of symmetries.}
\end{example}

From symplectic geometry, it is known that situations of the type described in Example \ref{excuG} appear in the case of Hamiltonian actions with a momentum map and the corresponding process is {\it Marsden-Weinstein reduction}. We will formally extend the definitions to almost symplectic manifolds $(M,\omega)$, which requires for some adjustments as follows \cite{{FS},{Vas}}.

A {\it locally Hamiltonian vector field} $X$ will be required to satisfy the conditions
\begin{equation}\label{locHam} di(X)\omega=0,\; \mathfrak{L}_X\omega=0\end{equation} and it is {\it Hamiltonian} if
\begin{equation}\label{campH}
\exists f\in C^\infty(M),\; i(X)\omega=df,\; \mathfrak{L}_X\omega=0\end{equation} ((\ref{campH}) implies (\ref{locHam})). If (\ref{campH}) holds, $f$ is a {\it Hamiltonian function} of $X$, it is defined up to a constant, and we denote $X=X_f$. The set of all Hamiltonian functions on $M$ will be denoted by $ \mathcal{H}(M,\omega)$. It follows easily that
$X_{fh}=fX_h+hX_f$ ($f,h\in C^\infty(M)$).

For two locally Hamiltonian vector fields $X,Y$ one has
$$i([X,Y])\omega=d(\omega(Y,X)),$$ which yields the following {\it Poisson bracket} of Hamiltonian functions $\{f,h\}=\omega(X_h,X_f)$. The bracket has the Hamiltonian field $[X_f,X_h]$. The Poisson algebra $\mathcal{H}(M,\omega)$ is equal to $C^\infty(M)$ in the symplectic case, but, it may be much smaller in other cases.
\begin{example}\label{exaprHam} {\rm
Take
$$M=\{(x^1,x^2,x^3,x^4)\,/\,
x^1>0,x^2>0\}\subseteq\mathds{R}^4$$ and
$$\omega=x^1dx^2\wedge dx^3+x^2dx^1\wedge dx^4,$$
$X$ is locally Hamiltonian iff
$$X=\varphi(t)\left(\frac{\partial}{\partial x^3}+\frac{\partial}{\partial x^4}\right),
\;\;t=x^1\cdot x^2$$ and the corresponding Hamiltonian function is the primitive of $-\varphi$. Thus, $\mathcal{H}(M,\omega)=\{f(t)\}$,
$$X_f=-\frac{\partial f}{\partial t}\left(\frac{\partial}{\partial x^3}+\frac{\partial}{\partial x^4}\right)$$ and all the Poisson brackets are zero.}\end{example}

Formally, the definition of {\it Hamiltonian actions} and {\it equivariant momentum maps} will be the same as in symplectic geometry \cite{{LM},{OR}}, but, (\ref{campH}) will be taken into account. Let us assume the existence of a Hamiltonian action of the Lie group $G$ on the almost symplectic manifold $(M,\omega)$ and of an equivariant momentum map $\mathfrak{J}:M\rightarrow\mathfrak{g}^*$ from $M$ to the dual of the Lie algebra of $G$. We recover a situation of the type described in Example \ref{excuG} in the following way. Take $N= \mathfrak{J}^{-1}(\theta)$ where $\theta\in\mathfrak{g}^*$ is a non critical value of $\mathfrak{J}$. The submanifold $N$ is invariant by the subgroup $G'=G_\theta$, the isotropy subgroup of $\theta$ by the coadjoint action of $G$ on $ \mathfrak{g}^*$ and $N\cap G(x)=G_\theta(x)$, where $x\in N$, $G(x)$ denotes the $G$-orbit of $x$ and $G_\theta(x)$ denotes the $G_\theta$-orbit of $x$. Then, $\forall x\in N$, $T_xN\perp_\omega T_x(G(x))$ (this is an algebraic fact \cite{LM}, hence, it holds in the almost symplectic case too), therefore, $K=ann(\iota^*\omega_x)=T_xN\cap T_x(G(x))=T_x(G_\theta(x))$, and properties i), ii) of Example \ref{excuG} hold. Thus, we get the following result.
\begin{prop}\label{redMW} {\rm1.} Assume that the Lie group $G$ has a Hamiltonian action on the almost symplectic manifold $(M,\omega)$ with equivariant momentum map $\mathfrak{J}$. Let $N= \mathfrak{J}^{-1}(\theta)$, where $\theta\in\mathfrak{g}^*$ is a non critical value of $\mathfrak{J}$ and assume that the action of $G_\theta$ on $N$ is free and proper. Then, there exists a reduced almost symplectic manifold $(Q,\varpi)$ of $(M,\omega)$ via $N$. {\rm2.} Let $\gamma$ be a $G$-invariant metric on $M$, which is non degenerate on $N=\mathfrak{J}^{-1}(\theta)$ and on the orbits of $G_\theta$ and satisfies the conditions {\rm(\ref{condKpK}), (\ref{condK'})}
($K=TG_\theta(x)$, $x\in N$).
Then, $Q$ also has a reduced metric $\lambda$ and $(Q,\varpi,\lambda)$ is almost (pseudo-)Hermitian or para-Hermitian, according to the sign in {\rm(\ref{condKpK})}. {\rm3.} In the integrable, para-Hermitian case, consider a field given by a compatible metric $g$ on $M$, which is $G$-invariant, has non degenerate restrictions to $N$ and to the orbits of $G_\theta$ and such that the orbits $G_\theta(x)$, $x\in N$ have the same normal bundle for the two metrics $\iota^*\gamma,\iota^*g$. Then, the reduced manifold $(Q,\varpi,\lambda)$ is endowed with a reduced field given by a reduced, compatible metric $\mu$.\end{prop}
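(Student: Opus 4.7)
The plan is to verify that the three statements of Proposition \ref{redMW} reduce to successive applications of Propositions \ref{geomredth}, \ref{redKpK} and \ref{redcamp}, with the link being the general construction laid out in Example \ref{excuG}. Throughout I would use the notation $K = T(G_\theta\cdot x)$ and $K' = K^{\perp_{\iota^*\gamma}}$ along $N = \mathfrak{J}^{-1}(\theta)$.

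For part 1, I would first use the algebraic identity $T_xN = (T_x G(x))^{\perp_\omega}$ that holds at any regular value of an equivariant momentum map (this is the usual symplectic argument, which still applies in the almost symplectic case since only the nondegenerate pairing $\omega_x$ is used); this gives $K = TN \cap TN^{\perp_\omega} = T(G_\theta\cdot x)$, i.e.\ $\mathcal{K}$ is the orbit foliation of $G_\theta$. The free and proper action of $G_\theta$ on $N$ supplies the fibration $p:N\to Q = N/G_\theta$ of hypothesis~3), and $K$ is locally spanned by fundamental vector fields $\xi_N$ of $G_\theta\subseteq G$, which satisfy $\mathfrak{L}_{\xi_N}(\iota^*\omega)=0$ because the ambient action is Hamiltonian; this gives hypotheses 1)-2). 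Proposition~\ref{geomredth} then furnishes the reduced almost symplectic $(Q,\varpi)$ via $N$.

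For part 2, I would first remark that (\ref{condKpK}) places $(M,\omega,\gamma)$ in the framework of Proposition~\ref{redKpK}, so the issue is to check that $N$ is a reducing submanifold. Nondegeneracy of $\gamma$ on $N$ and on $K$ is assumed. Projectability of $\gamma|_{K'}$ is an easy consequence of $G$-invariance: for $\mathcal{K}$-foliated $X,Y\in\Gamma K'$ and $Z\in\Gamma K$ locally spanned by fundamental vector fields $\xi_N$, one has $\mathfrak{L}_Z\gamma = 0$ and $[Z,X]=[Z,Y]=0$, so $Z(\gamma(X,Y)) = (\mathfrak{L}_Z\gamma)(X,Y) = 0$; hence $\gamma|_{K'} = p^*\lambda$. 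Condition (\ref{condK'}) is part of the hypothesis, so Proposition~\ref{redKpK} produces the reduced almost (pseudo-)Hermitian or almost para-Hermitian structure $(Q,\varpi,\lambda)$; integrability of $F$ on $M$ transfers to $Q$ by Remark~\ref{obspropred}.

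For part 3, I would check the three assumptions a), b), c) of Proposition~\ref{redcamp} for the compatible metric $g$. Nondegeneracy of $g$ on $N$ and on $K$ is assumed, and the hypothesis that the $G_\theta$-orbits have the same normal bundle in $(N,\iota^*\gamma)$ and in $(N,\iota^*g)$ is precisely $K^{\perp_g}\cap TN = K^{\perp_\gamma}\cap TN = K'$, which is assumption b) and at the same time identifies the subbundle on which projectability of $g$ must hold with $K'$. The argument used for $\gamma$ in part~2 then applies verbatim to $g$ because $g$ is also $G$-invariant and $K$ is still locally spanned by fundamental vector fields of $G_\theta$, so $g|_{K'} = p^*\mu$ for a well-defined (pseudo-)Riemannian metric $\mu$ on $Q$. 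Proposition~\ref{redcamp} then yields compatibility of $\mu$ with $\lambda$ and concludes the proof.

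The only real obstacle is the opening algebraic step of part~1, namely the identification $K = T(G_\theta\cdot x)$ from $T_xN = (T_xG(x))^{\perp_\omega}$; everything after that is a bookkeeping exercise in which the $G$-invariance of $\gamma$ and $g$ does all the work needed to feed the hypotheses of Propositions~\ref{redKpK} and \ref{redcamp}.
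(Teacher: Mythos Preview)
Your proposal is correct and follows exactly the paper's approach: the paper's proof simply says to ``conveniently put together the results of Propositions \ref{geomredth}, \ref{redKpK}, \ref{redcamp}'' with the only missing detail being projectability of $\iota^*\gamma|_{K'}$ and $\iota^*g|_{K'}$, which follows from $G$-invariance --- precisely what you spell out. One small imprecision: for $\mathcal{K}$-foliated $X$ and $Z\in\Gamma K$ you only get $[Z,X]\in\Gamma K$ (not $[Z,X]=0$ unless $X$ is chosen $G_\theta$-invariant), but since $Y\in K'=K^{\perp_{\iota^*\gamma}}$ the term $\gamma([Z,X],Y)$ still vanishes and your conclusion stands.
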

\begin{proof} Conveniently put together the results of Propositions
\ref{geomredth}, \ref{redKpK}, \ref{redcamp}. The missing detail is the projectability of the normal components of the metrics $\iota^*\gamma,\iota^*g$ to $Q$. But, this is an obvious consequence of the $G$-invariance of the metrics $\gamma,g$.
\end{proof}
\section{Examples}
We will define compatible metrics on several well-known para-Hermitian manifolds, whose definition we recall.
\begin{example}\label{GG} {\rm\cite{EST} Let $G$ be a connected Lie group endowed with a left invariant (pseudo-)Riemannian metric $\mu$, and let $(X_i)$, $(\omega^i)$ be a basis of left invariant vector fields and the dual basis of left invariant $1$-forms, respectively. Put $M=G_1\times G_2$ where $G_1= G_2=G$, $\iota_1(g)=(g,e),\iota_2(g)=(e,g)$, where $e$ is the unit of $G$, $\pi_1(g_1,g_2)=g_1,\pi_2(g_1,g_2)=g_2$ and, generally, attach the index $1,2$ to images by $\iota_1,\pi_1,\iota_2,\pi_2$, of objects of $G$. $M$ has two almost para-complex structures given by
$$FX_1=X_1,\,FX_2=-X_2,\,HX_1=X_2,\,
HX_2=X_1,\;X\in TG,$$
the non degenerate, neutral metric
$$\gamma=\mu_{ij}(\omega_1^i\otimes\omega_2^j+\omega_2^i\otimes\omega_1^j)
\;\;(\mu=\mu_{ij}\omega^i\otimes\omega^j),$$
the (pseudo-)Riemannian metric $$g=\mu_{ij}(\omega_1^i\otimes\omega_1^j+\omega_2^i\otimes\omega_2^j)$$ and the $2$-form
$$\omega=\mu_{ij}\omega_1^i\wedge\omega_2^j.$$
It is easy to see that $(M,F,\gamma)$ is a para-Hermitian manifold with the fundamental form $-\omega$ and that $g$ is a compatible (pseudo-)Riemannian metric with corresponding almost para-complex structure $H$. Using the Cartan equations $$d\omega^i=\frac{1}{2}(c^i_{jk}\omega^k\wedge\omega^j)$$ where $c^i_{jk}$ are the structure constants of the Lie algebra $\mathfrak{g}$ of $G$, it follows that $M$ is para-K\"ahler ($d\omega=0$) iff the group $G$ is Abelian, e.g., a torus.}\end{example}
\begin{example}\label{fibrtg} {\rm Let $M=TN$ be the total space of the tangent bundle of an $n$-dimensional, (pseudo-)Riemannian manifold $N$ with the metric $\mu$. $M$ has a natural almost para-complex structure $F$ with the eigenbundles $L_+=\mathcal{H},L_-=\mathcal{V}$, where $\mathcal{V}$ is tangent to the fibers of $TN$ and $\mathcal{H}$ is the horizontal distribution of the Levi-Civita connection $\nabla^\mu$ of $\mu$ (e.g., \cite{{AMT},{EST}}). A tangent vector $X\in TN$ has a horizontal and a vertical lift defined by
$$ X^{hor}=X^i\left(\frac{\partial}{\partial x^i}-\Gamma^k_{ij}\dot{x}^j
\frac{\partial}{\partial \dot{x}^k}\right),\; X^{vert}=X^i\frac{\partial}{\partial \dot{x}^i},$$ where $x^i$ are local coordinates on $N$, $\dot{x}^i$ are the corresponding natural coordinates along the fibers of $TN$, $X=X^i(\partial/\partial x^i)$ and $\Gamma^k_{ij}$ are the Christoffel symbols. Then, $$FX^{hor}=X^{hor},\, FX^{vert}=-X^{vert},$$ there exists an $F$-compatible metric $\gamma$ on $M$ given by
$$\gamma(X^{hor},Y^{hor})=0,\; \gamma(X^{vert},Y^{vert})=0,\;
\gamma(X^{hor},Y^{vert})=\mu(X,Y),$$ and $(TN,\gamma ,F)$ is an almost para-Hermitian manifold.

A second almost para-complex structure $H$ is given by $$HX^{hor}=X^{vert},\;HX^{vert}=X^{hor}$$ and the second condition (\ref{ggammaH}) holds. Therefore, $(TN,\gamma,F,H)$ is a (pseudo-)Riemannian, almost para-Hermitian manifold. The compatible metric $g$ defined by $H$ is
$$g(X^{hor},Y^{hor})=\mu(X,Y),\;g(X^{vert},Y^{vert})=\mu(X,Y),\;
g(X^{hor},Y^{vert})=0$$ and it is the so-called Sasaki metric on $TM$. If the metric $\mu$ is flat, $H$ is integrable and we are in the case of a (pseudo-)Riemannian, para-K\"ahler manifold endowed with the Sasaki field $g$.}
\end{example}
\begin{example}\label{prspace} {\rm\cite{GA} Take $M=W\oplus W^*$, where $W$ is a real $n$-dimensional vector space and $W^*$ is its dual space. $M$ has a structure of para-Hermitian vector space with the para-complex structure $F|_W=Id,F|_{W^*}=-Id$ and the metric
$$\gamma((X,\xi),(Y,\eta))= \eta(X)+\xi(Y),\;\; X,Y\in W,\xi,\eta\in W^*.$$ The corresponding fundamental form is
$$\omega((X,\xi),(Y,\eta))=\xi(Y)- \eta(X).$$
Let $(x^i, y_i)_{i=1}^{n}$ be coordinates with respect to dual bases $(e_i)$, $(\epsilon^i)$ in $W,W^*$. Then, the tangent spaces of $M$ may be identified with $W\oplus W^*$ by sending $x^ie_i,y_i\epsilon^i$ to
$x^i(\partial/\partial x^i),y_i(\partial/\partial y_i)$ and we get the para-K\"ahler structure
\begin{equation}\label{tensinproj}
F\frac{\partial}{\partial x^i}=\frac{\partial}{\partial x^i}, F\frac{\partial}{\partial y_i}=-\frac{\partial}{\partial y_i},\; \gamma=dx^i\otimes dy_i,\;
\omega=-dx^i\wedge dy_i.
\end{equation}
We can construct a compatible metric $g$ on $M$ starting with a corresponding pair $(g_-,\beta_-)$ (see Section 2) where $g_-$ is induced on $W^*$ by a metric $s$ on $W$. Then, formulas (\ref{gcumatricea}) give
$$\begin{array}{l}
g((0,\xi),(0,\eta))=s^{-1}(\xi,\eta),\;
g((0,\xi),(X,0))=\beta_-(\xi,\flat_sX),\vspace*{2mm}\\
g((X,0),(Y,0))=s(X,Y)+\beta_-(\flat_sX,\flat_\beta Y)\end{array}$$
($\beta$ is defined by $\beta_-$ by (\ref{defk})).}\end{example}
\begin{example}\label{prspace2} {\rm\cite{GA}
The	{\it para-complex projective model} is the manifold $P^{2(n-1)}$ which is the quotient of	 $M\setminus M_0$, where $M$ is the manifold of Example \ref{prspace}, $M_0=\{(X,\xi)\,/\,\xi(X)=0\},$ by the group $Dl(2,\mathds{R})$ of real, non degenerate, diagonal $2\times2$ matrices acting by $(X,\xi)\mapsto(aX,b\xi)$, $ab\neq0$. Formulas (\ref{tensinproj}) show that $F$ and the metric
$ \tilde{\gamma}=(1/x^iy_i)\gamma$
are invariant under the action of $Dl(2,\mathds{R})$, therefore, they project to a para-Hermitian structure on $P$ whose fundamental form is the projection of $\tilde{\omega}=-dx^i\wedge dy_i/x^iy_i$.
The latter projection is closed and the obtained structure of $P$ is para-K\"ahlerian. Indeed, one also has
$$P=M_+/Dl_+(2,\mathds{R})=S/\mathds{R}_+,$$
where $M_+=\{(X,\xi)\,/\,\xi(X)>0\}$, $Dl_+(2,\mathds{R})$ is the subgroup of $Dl(2,\mathds{R})$ with $a>0,b>0$, $S=\{(X,\xi)\,/\,\xi(X)=1\}$ and $\mathds{R}_+$ is seen as the subgroup of $Dl(2,\mathds{R})$ such that $a>0,b>0,ab=1$. Accordingly, the metric induced on $P$ by $\tilde{\gamma}$ is also the pullback of $\tilde{\gamma}$ to the quotient of $S$ by $\mathds{R}_+$, and, under the restriction $\xi(X)=x^iy_i=1$, we get $d\tilde{\omega}=0$.

Furthermore, take the form $\beta_-=0$ and the metric $g_-=s^{-1}$ for $$s=(1/(s_{ij}x^ix^j))(s_{ij}dx^i\otimes dx^j)$$ where $s_{ij}=const.$ are the components of a positive definite metric on $W$.
We may define $s$ in this way since $x^iy_i\neq0$ implies that not all $x^i$ vanish and $s$ is invariant by $(x^i)\mapsto(ax^i)$, $a\neq0$. The corresponding $\tilde{\gamma}$-compatible metric of $M_+$ is given by
$$g=\frac{s_{ij}dx^i\otimes dx^j}{s_{ij}x^ix^j}
+(s_{hl}x^hx^l)s^{ij}dy_i\otimes dy_j.$$ The metric $g$ is not invariant by the action of $Dl_+(2,\mathds{R})$. But its pullback to $S$ is invariant by the action of $\mathds{R}_+$ on $S$, hence, it projects to a metric of $P$, which together with the projections of $F$ and of the pullback of $\tilde{\gamma}$ to $S$ make $P$ into a Riemannian para-K\"ahler manifold.}\end{example}
\noindent
{\footnotesize{\bf Addendum}.
The name ``projective model" comes from the fact that its construction is related with the construction of the {\it para-complex projective space} (e.g., \cite{Lib}) which is worthy to be recalled.
Denote by $\hspace{1pt}_\mathbf{p}\hspace{-1pt}\mathbf{C}$ the algebra of para-complex numbers defined by
\begin{equation}\label{paracnumb}
\hspace{1pt}_\mathbf{p}\hspace{-1pt}\mathbf{C}=
\{\mathbf{z}=
p+q\mathbf{h}\,/\,p,q\in\mathds{R},\,
\mathbf{h}^2=1\}
\approx\mathfrak{a}l(2,\mathds{R}):=
\left\{\left(\begin{array}{cc}p&q\vspace{2mm}\\ q&p\end{array}\right)\right\}.
\end{equation}
(Other names encountered in the literature are split-complex numbers, Study numbers, hyperbolic numbers, etc., and other notations are $\mathds{A},\mathds{H}$, etc.)
$\hspace{1pt}_\mathbf{p}\hspace{-1pt}\mathbf{C}$ is endowed with the conjugation involution $\bar{\mathbf{z}}
=p-q\mathbf{h}$ and $\mathbf{z}$ is invertible iff $\mathbf{z}\bar{\mathbf{z}}= p^2-q^2\neq0$. Denote by $\hspace{1pt}_\mathbf{p}\hspace{-1pt}\mathbf{C}_0$ the subset of non-invertible para-complex numbers and by $\hspace{1pt}_\mathbf{p}\hspace{-1pt}\mathbf{C}_{inv}$ the subset of invertible numbers; the latter is isomorphic to the multiplicative group $\mathfrak{A}l(2,\mathds{R})$ of non degenerate matrices of the form (\ref{paracnumb}). If we define new components
$$\alpha=p+q,\beta=p-q,$$ we get an algebra isomorphism
\begin{equation}\label{algB}
\hspace{1pt}_\mathbf{p}\hspace{-1pt}\mathbf{C}\approx\mathds{B}:=
\mathds{R}^2 \end{equation} seen as the direct sum of two copies of $\mathds{R}$ with its usual algebra structure (in particular $(\alpha,\beta)\cdot(\alpha',\beta')=(\alpha\alpha',\beta\beta')$). Conjugation now means $ \overline{(\alpha,\beta)}=(\beta,\alpha)$ and the invertibility condition becomes $\alpha.\beta\neq0$; the set of non-invertible elements will be denoted by $\mathds{B}_0$ and that of invertible elements by $\mathds{B}_{inv}$.

The $n$-dimensional, {\it para-complex projective space} is defined by imitating the definition of the real and complex projective spaces as follows. Put
$$\hspace{1pt}_\mathbf{p}\hspace{-1pt}\mathbf{C}\mathbf{P}^n=
\{[(z^1,...,z^{n+1})]
\,/\,z^i\in\hspace{1pt}_\mathbf{p}\hspace{-1pt}\mathbf{C},\,
\sum_{i=1}^{n+1}|z^i\bar{z}^i|\neq0\},$$
where the bracket denotes the equivalence class by the equivalence relation
$$(z^1,...,z^{n+1})\approx\lambda(z^1,...,z^{n+1}),\;\lambda\in \hspace{1pt}_\mathbf{p}\hspace{-1pt}\mathbf{C}_{inv}.$$ The condition
$\sum_{i=1}^{n+1}|z^i\bar{z}^i|\neq0$ means that at least one of the {\it homogeneous coordinates} $z^i$ is invertible, i.e., we have
$$\hspace{1pt}_\mathbf{p}\hspace{-1pt}\mathbf{C}\mathbf{P}^n=
(\hspace{1pt}_\mathbf{p}\hspace{-1pt}\mathbf{C}^{n+1}
\setminus \hspace{1pt}_\mathbf{p}\hspace{-1pt}\mathbf{C}_0^{n+1})/\{v\approx\lambda v,\,
\forall v\in (\hspace{1pt}_\mathbf{p}\hspace{-1pt}\mathbf{C}^{n+1}
\setminus \hspace{1pt}_\mathbf{p}\hspace{-1pt}\mathbf{C}_0^{n+1}),
\,\forall \lambda\in \hspace{1pt}_\mathbf{p}\hspace{-1pt}\mathbf{C}_{inv}\}$$
and, with the isomorphism (\ref{algB}), we have
$$\hspace{1pt}_\mathbf{p}\hspace{+1pt}\mathbf{C}\mathbf{P}^n=
(\mathds{B}^{n+1}\setminus \mathds{B}_0^{n+1})/\{\zeta\approx\mu\zeta,\,
\forall\zeta\in(\mathds{B}^{n+1}\setminus \mathds{B}_0^{n+1}),\,\mu\in\mathds{B}_{inv}\}.$$ By taking the components $\zeta=(a,b)$, we get an identification $\hspace{1pt}_\mathbf{p}\hspace{+1pt}\mathbf{C}
\mathbf{P}^n\approx\mathds{R}P^n
\times\mathds{R}P^n$.

Now, we restrict the previous construction and define
$$\mathds{B}\mathbf{P}^n=\{[(z^1,...,z^{n+1})]
\,/\,z^i\in\hspace{1pt}_\mathbf{p}\hspace{-1pt}\mathbf{C},\,
\sum_{i=1}^{n+1}z^i\bar{z}^i\neq0\}$$ $$\approx\{[(\alpha^1,\beta^1),...,
(\alpha^{n+1},\beta^{n+1})]\,/\sum_{i=1}^{n+1}\alpha^i\beta^i\neq0\},$$
where, in the second expression, the bracket denotes the equivalence class by the equivalence relation defined by multiplication with $\mu=(\kappa,\sigma)$ such that $\kappa\sigma\neq0$.
The set that defines $\mathds{B}\mathbf{P}^n$ is a	subset of the set that defines $\hspace{1pt}_\mathbf{p}\hspace{-1pt}\mathbf{C}\mathbf{P}^n$. Obviously, the space $\mathds{B}\mathbf{P}^n$ is the projective model defined by the linear space $W=\mathds{R}^{n+1}$.

Notice also that the same space $\mathds{B}\mathbf{P}^n$ is obtained if we ask $\sum_{i=1}^{n+1}z^i\bar{z}^i>0$ ($\sum_{i=1}^{n+1}\alpha^i\beta^i>0$) and ask $\mu=(\alpha,\beta)$ to satisfy the condition $\alpha>0,\beta>0$, which is equivalent to asking the factor $\lambda=l+s\mathbf{h}$ to satisfy the conditions $l>0,l^2-s^2>0$. Indeed, if $\sum_{i=1}^{n+1}\alpha^i\beta^i<0$, we may multiply by the factor $(1,-1)$ and get an equivalent element with $\sum_{i=1}^{n+1}\alpha^i\beta^i>0$.

The spaces $\hspace{1pt}_\mathbf{p}\hspace{-1pt}\mathbf{C}\mathbf{P}^n$ and $\mathds{B}\mathbf{P}^n$are covered by	$n+1$ domains of para-complex, non-homogeneous coordinates, e.g., $\mathbf{u}^i_{(n+1)}=\mathbf{z}^i/\mathbf{z}^{n+1}$, $i=1,...,n$, on the domain $\mathbf{z}^{n+1}\in \hspace{1pt}_\mathbf{p}\hspace{-1pt}\mathbf{C}_{inv}$, etc. This shows that $\hspace{1pt}_\mathbf{p}\hspace{-1pt}\mathbf{C}\mathbf{P}^n$ and $\mathds{B}\mathbf{P}^n$ are real, differentiable manifolds of dimension $2n$.}
{\small Department of
Mathematics, University of Haifa, Israel,
vaisman@math.haifa.ac.il}
\end{document}